\numberwithin{equation}{section}
\newcommand{\Tr}{\mathit{Tr}}
\newcommand{\MRs}{\mathit{MR}}
\newcommand{\1}{\mathds{1}}
\newcommand{\K}{\mathds{K}}
\newcommand{\R}{{\mathds{R}}}
\renewcommand{\C}{\mathds{C}}
\newcommand{\A}{\mathcal{A}}
\renewcommand{\L}{\mathcal{L}}
\renewcommand{\d}{\mathrm{d}}
\renewcommand{\Re}{\operatorname{Re}}
\newcommand{\fra}{\mathfrak{a}}
\newcommand{\frb}{\mathfrak{b}}
\newcommand{\frc}{\mathfrak{c}}
\newcommand\norm[1]{\lVert #1 \rVert}
\newcommand\abs[1]{\lvert #1 \rvert}
\newcommand{\normalabs}[1]{\lvert#1\rvert}
\newcommand{\LeftEqNo}{\let\veqno\@@leqno}
\theoremstyle{plain}
\newtheorem{theorem}{Theorem}[section]
\newtheorem{proposition}[theorem]{Proposition}
\theoremstyle{remark}
\newtheorem{remark}[theorem]{Remark}
\theoremstyle{definition}
\newtheorem{example}[theorem]{Example}
\newtheorem{definition}[theorem]{Definition}
\newtheorem{problem}[theorem]{Problem}
\title{J.~L.~Lions' Problem on Maximal Regularity}
\author{Wolfgang Arendt}
\author{Dominik Dier}
\author{Stephan Fackler}
\address{Institute of Applied Analysis, Ulm University, Helmholtzstr.\ 18, 89069 Ulm}
\email{wolfgang.arendt@uni-ulm.de, dominik.dier@uni-ulm.de, \newline stephan.fackler@uni-ulm.de}
\thanks{The first and third author were supported by the DFG grant AR 134/4-1 ``Regularität evolutionärer Probleme mittels Harmonischer Analyse und Operatortheorie''.}
\keywords{sesquilinear forms, non-autonomous evolution equations, maximal regularity}
\subjclass[2010]{Primary 35B65; Secondary 47A07.}
\begin{document}

\begin{abstract}
	This is a survey on recent progress concerning maximal regularity of non-autonomous equations governed by time-dependent forms on a Hilbert space. It also contains two new results showing the limits of the theory.
\end{abstract}

\maketitle

\section{Introduction}

The purpose of this survey is to describe the history and the
state of the art of J.~L.~Lions' problem on maximal regularity
for non-autonomous forms.
In particular, we formulate the problem in the concrete case 
of parabolic equations for which it is open.
We explain some consequences of a positive answer for quasi-linear parabolic 
equations. But we also present new results. The counterexample in Section~\ref{sec:counterexample} is published here for the first time. Section~\ref{sec:critical} on the critical case is new.

\section{Autonomous forms}\label{sec:autonomous_forms}

Throughout this note $H$ and $V$ are Hilbert spaces over $\K= \R$ or $\C$
such that $V$ is continuously embedded into $H$ and also dense in $H$.
We identify $h \in H$ with the functional $(h|\cdot)_H$ in $V'$ and thus we obtain
the Gelfand triple $V \hookrightarrow H \hookrightarrow V'$.
The spaces $V$ and $H$ are fixed and will not be mentioned explicitly further on.
An \emph{autonomous form} is a continuous, sesquilinear mapping $\fra \colon V \times V \to \K$.
Assume that the form is \emph{coercive}; i.e.,
\[
	\Re \fra(v,v) \ge \alpha\norm{v}^2_V \quad \text{for all }v \in V
\]
and some $\alpha >0$.
Then we associate the operator $\A \in \L(V,V')$ with $\fra$ by defining $\A v = \fra(v, \cdot)$ for $v \in V$.
Then $-\A$ generates a holomorphic semigroup on $V'$.
Frequently, the part $A$ of $\A$ in $H$ given by
\[
	D(A) = \{ v \in V : \A v \in H \},\quad Av = \A v
\]
is more important.
We call $A$ \emph{the operator on $H$ associated with $\fra$}.
This operator is suitable to incorporate diverse boundary conditions. Also $-A$ generates a contractive holomorphic
$C_0$-semigroup on $H$. We mention en passant that precisely those operators on $H$ which have a bounded $H^{\infty}$-calculus come from a form in that way~\cite{AreBuHaa01}. For the definition of fractional powers used below we refer to~\cite{Haa06}.

\begin{definition}
	The form $\fra$ has the \emph{Kato square root property} if $V=D(A^{1/2})$.
\end{definition}

McIntosh \cite{McI72} gave an example of a form that does not have the square root property.
By \cite[Theorem~1]{Kat62} it follows that there exists an example for which
\begin{equation}\label{eq:Kato}
	V \not\subset D(A^{1/2}) \quad\text{and}\quad  D(A^{1/2}) \not \subset V.
\end{equation}
Observe that we may take a direct sum to violate both inclusions.
However, if $\fra$ is \emph{symmetric}; i.e., $\fra(v,w) = \overline{\fra(w,v)}$ for all $v,w \in V$,
then the square root property is fulfilled: since $A^{1/2} = A^{1/2 *}$ and
\begin{equation*}
	\alpha \norm{v}_V^2 \le \fra(v,v) = (A^{1/2}v | A^{1/2*}v)_H = \norm{A^{1/2} v}_H^2 \le M \norm{v}_V^2 \qquad \text{for all } v \in D(A).
\end{equation*}
We give an example to illustrate how Neumann boundary conditions
are incorporated into the operator $A$.
Many further examples, e.g.\ Dirichlet and Robin boundary conditions, are well-known
and of importance.
The choice of dimension 1 is just for simplicity.

\begin{example}[the Neumann Laplacian]\label{ex:neumann}
	Let $H= L^2(0,1)$, $V=H^1(0,1)$, $\K = \R$, 
	$m \colon (0,1) \to [\delta, \frac 1 \delta]$ measurable,
	where $0<\delta<1$.
	Define the coercive form $\fra \colon V \times  V \to \R$ by
	\[
		\fra(v,w) = \int_0^1 m v' w' \ \d{x} + \int_0^1 v w \ \d{x}.
	\]
	No boundary condition is visible if we consider the operator $\A \colon H^1(0,1) \to (H^1(0,1))'$.
	However, its part $A$ in $L^2(0,1)$ is given by
	\begin{align*}
		D(A)&= \{ v \in H^1(0,1) : m v' \in H^1(0,1), (mv')(0) = (mv')(1)=0 \}\\
		Av&= -(m v')'.
	\end{align*}
	Recall that $H^1(0,1) \subset C([0,1])$.
	Choosing the unique continuous representative, the Neumann boundary condition incorporated into $D(A)$ makes sense.
\end{example}

\section{Non-autonomous forms}

Let $T>0$ and let $\fra \colon [0,T] \times V \times V \to \K$ be a \emph{non-autonomous form}; i.e., $\fra( \cdot,v,w) \colon [0,T]\to\K$ is measurable for all $v,w \in V$ and
\begin{equation*}
	\abs{\fra(t,v,w)} \le M \norm{v}_V \norm{w}_V \quad \text{for all } t\in [0,T],\, v,w \in V
\end{equation*}
and some $M\ge0$. Further, we assume that $\fra$ is \emph{coercive}; i.e.,
\[
	\Re \fra(t,v,v) \ge \alpha \norm{v}_V^2 \quad \text{for all } t\in [0,T],\, v \in V
\]
and some $\alpha >0$.
As before we consider $\A(t) \in \L(V,V')$ given by $\A(t)v = \fra(t,v, \cdot)$. If $X, Y$ are Hilbert spaces such that $X \hookrightarrow Y$ (i.e.\ $X$ is continuously embedded in $Y$),
then we define the Hilbert space 
\begin{equation*}
	\MRs(X,Y) \coloneqq \{ u \in C([0,T];Y): u \in L^2(0,T;X) \cap H^1(0,T;Y) \}.
\end{equation*}
In particular, since we consider throughout $V \hookrightarrow V'$ with $H$ as pivot, we have $\MRs(V,V') = L^2(0,T;V) \cap H^1(0,T;V')$.
This is the \emph{maximal regularity space} of the solutions in Lions' theorem below.
Note that $\MRs(V,V') \hookrightarrow C([0,T];H)$.
Using this notation, we can formulate Lions' well-posedness result with maximal regularity in $V'$.

\begin{theorem}\label{thm:Lions}
	Let $u_0 \in H$ and $f\in L^2(0,T;V')$.
	Then there exists a unique $u \in \MRs(V,V')$ such that
	\begin{equation*}
		\LeftEqNo
		\label{eq:ncp}
		\tag{NCP}
		\left\{ \begin{aligned}
		&u'(t)+ \A(t)u(t) = f(t) \quad t\text{-a.e.}\\
		&u(0)= u_0.
		\end{aligned} \right.
	\end{equation*}
\end{theorem}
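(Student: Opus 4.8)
The natural approach is to derive the statement from a non-symmetric variant of the Lax--Milgram lemma due to J.~L.~Lions --- continuity in one variable together with a one-sided coercivity estimate suffice --- applied in a space of $V$-valued functions on $[0,T]$. Concretely, I would set $\mathcal{H} \coloneqq L^2(0,T;V)$ and take as test space
\[
	\Phi \coloneqq \{ \phi \in H^1(0,T;V) : \phi(T) = 0 \},
\]
endowed with the prehilbertian norm $\norm{\phi}_\Phi^2 \coloneqq \norm{\phi}_{L^2(0,T;V)}^2 + \norm{\phi(0)}_H^2$, so that $\Phi$ embeds continuously into $\mathcal{H}$. On $\mathcal{H} \times \Phi$ I consider the sesquilinear form
\[
	E(u,\phi) \coloneqq \int_0^T \fra(t,u(t),\phi(t)) \,\d t - \int_0^T (u(t) \mid \phi'(t))_H \,\d t
\]
together with the antilinear functional $L(\phi) \coloneqq \int_0^T \langle f(t),\phi(t)\rangle \,\d t + (u_0 \mid \phi(0))_H$. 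Boundedness of $\fra$ shows that $E(\cdot,\phi)$ is $\mathcal{H}$-continuous for each fixed $\phi$, and $L$ is $\Phi$-continuous. The decisive point is coercivity of $E$ on $\Phi$: since $\Re\int_0^T (\phi(t)\mid\phi'(t))_H\,\d t = \tfrac12\bigl(\norm{\phi(T)}_H^2 - \norm{\phi(0)}_H^2\bigr) = -\tfrac12\norm{\phi(0)}_H^2$ by the endpoint condition $\phi(T)=0$, coercivity of $\fra$ yields
\[
	\Re E(\phi,\phi) \ge \alpha\norm{\phi}_{L^2(0,T;V)}^2 + \tfrac12\norm{\phi(0)}_H^2 \ge \min\{\alpha,\tfrac12\}\,\norm{\phi}_\Phi^2 .
\]
Lions' representation lemma then produces $u \in \mathcal{H} = L^2(0,T;V)$ with $E(u,\phi) = L(\phi)$ for all $\phi \in \Phi$.

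In the second step I would promote $u$ to an element of $\MRs(V,V')$ and read off the differential equation. Testing the identity $E(u,\phi)=L(\phi)$ against $\phi = \psi\otimes v$ with $\psi\in C_c^\infty(0,T)$ and $v\in V$ gives, in $\mathcal{D}'(0,T)$,
\[
	\frac{\d}{\d t}(u(\cdot)\mid v)_H = \langle f(\cdot) - \A(\cdot)u(\cdot),\, v\rangle .
\]
Because $\norm{\A(t)u(t)}_{V'}\le M\norm{u(t)}_V$, the right-hand side defines a function in $L^2(0,T;V')$, so by the standard characterization of weak derivatives of $V'$-valued functions, $u$ has a distributional derivative $u' = f - \A(\cdot)u \in L^2(0,T;V')$. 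Hence $u\in\MRs(V,V')$; in particular $u\in C([0,T];H)$ and $u'(t)+\A(t)u(t)=f(t)$ for a.e.\ $t$. To recover the initial condition, I would invoke the integration-by-parts rule valid on $\MRs(V,V')$: for $\phi\in\Phi$,
\[
	\int_0^T \langle u'(t),\phi(t)\rangle\,\d t + \int_0^T (u(t)\mid\phi'(t))_H\,\d t = -(u(0)\mid\phi(0))_H .
\]
Substituting $\langle u'(t),\phi(t)\rangle = \langle f(t),\phi(t)\rangle - \fra(t,u(t),\phi(t))$ and comparing the outcome with $E(u,\phi)=L(\phi)$ yields $(u(0)\mid\phi(0))_H = (u_0\mid\phi(0))_H$ for every $\phi\in\Phi$; since $\phi(0)$ runs through all of $V$ (take $\phi(t)=(1-t/T)v$) and $V$ is dense in $H$, this forces $u(0)=u_0$.

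Uniqueness follows from an energy estimate: if $u\in\MRs(V,V')$ solves \eqref{eq:ncp} with $f=0$ and $u_0=0$, then pairing the equation with $u(t)$, taking real parts and using $\Re\langle u'(t),u(t)\rangle = \tfrac12\frac{\d}{\d t}\norm{u(t)}_H^2$ (again the $\MRs$ product rule) gives, after integration,
\[
	\tfrac12\norm{u(t)}_H^2 + \alpha\int_0^t\norm{u(s)}_V^2\,\d s \le \tfrac12\norm{u(t)}_H^2 + \int_0^t\Re\fra(s,u(s),u(s))\,\d s = \tfrac12\norm{u(0)}_H^2 = 0,
\]
whence $u\equiv 0$. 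The step I expect to be the genuine obstacle is the design of the variational problem in the first paragraph: one must choose the test space $\Phi$ together with the norm $\norm{\cdot}_\Phi$ so that, simultaneously, $\Phi\hookrightarrow L^2(0,T;V)$ holds and the boundary term produced by integration by parts has the \emph{right sign}, turning the one-sided coercivity of $\fra$ into coercivity of $E$ on $\Phi$; the endpoint condition $\phi(T)=0$ (rather than $\phi(0)=0$) is exactly what makes this work. The remaining parts become routine once the two auxiliary facts about $\MRs(V,V')$ --- the embedding $\MRs(V,V')\hookrightarrow C([0,T];H)$ and the product/integration-by-parts rule --- are available, so I would record (or cite) these as preliminary lemmas before carrying out the argument.
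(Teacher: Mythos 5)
The paper states Theorem~\ref{thm:Lions} without proof, simply attributing it to Lions; the intended reference is the classical argument from~\cite{Lio61} (see also Chapter III of Showalter's book). Your proposal is correct and reproduces exactly that classical strategy: existence via Lions' representation lemma (the Lions--Lax--Milgram theorem, which relaxes Lax--Milgram by requiring continuity in only one variable and coercivity only on the smaller test space $\Phi$, at the price of losing uniqueness in the lemma itself), followed by a bootstrap to read off $u' = f - \A(\cdot)u(\cdot) \in L^2(0,T;V')$, recovery of the initial condition from the integration-by-parts identity on $\MRs(V,V')$, and a separate energy estimate for uniqueness. The one thing you correctly flagged as non-routine is the design of $\Phi$ with the terminal condition $\phi(T)=0$ and the norm $\norm{\phi}_\Phi^2 = \norm{\phi}_{L^2(0,T;V)}^2 + \norm{\phi(0)}_H^2$, which makes the boundary term $-\tfrac12\norm{\phi(0)}_H^2$ come out with the favourable sign; this is precisely Lions' device and your verification of coercivity and of continuity of $E(\cdot,\phi)$ and $L$ is sound.
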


Note that the terms $u'$, $\A(\cdot)u(\cdot)$ and $f$ lie in the space $L^2(0,T;V')$, which is the reason for the terminology ``maximal regularity''.
However, as we saw before, the right operator for solving a concrete problem is the part $A(t)$ of $\A(t)$ in $H$.
So the central problem can be formulated as follows.

\begin{problem}[Lions' Problem]
Let $f \in L^2(0,T;H)$. Under which conditions on the form $\fra$ the solution $u \in \MRs(V,V')$ of~\eqref{eq:ncp} satisfies $u \in H^1(0,T;H)$?
\end{problem}

Lions asked this question for several conditions on the form and on the initial value. He also gave partial positive answers 
as we will explain below.
It is illuminating to treat the problem of maximal regularity in $H$ for the initial value $u_0 = 0$ first
and to deal with other initial conditions by identifying the trace space later on. We start to define what we desire for $u_0 = 0$.

\begin{definition}
	A non-autonomous coercive form $\fra \colon [0,T] \times V \times V \to \K$
	satisfies \emph{maximal regularity in $H$} if for $u_0 = 0$ and each $f \in L^2(0,T;H)$ the solution $u \in \MRs(V,V')$ of~\eqref{eq:ncp} is in $H^1(0,T;H)$.
\end{definition}

As a consequence, $u(t) \in D(A(t))$ a.e.\ and $u'(t)+A(t)u(t)=f(t)$ for almost every $t \in [0,T]$.
Thus all three functions $u', A(\cdot)u(\cdot)$ and $f$ are in $L^2(0,T;H)$, which is the reason for the terminology ``maximal regularity in $H$''.
As a consequence, the solution is in the \emph{maximal regularity space with respect to $\fra$ and $H$}, namely
\[
	\MRs_\fra(H) := \{ u \in \MRs(V,H) : \A(\cdot) u(\cdot) \in L^2(0,T;H) \}.
\]
This is a Hilbert space for the norm
\[
	\norm{u}^2_{\MRs_\fra(H)} = \norm{u'}_{L^2(0,T;H)}^2 + \norm{\A(\cdot) u(\cdot)}^2_{L^2(0,T;H)}.
\]
We define the corresponding \emph{trace space} by $\Tr(\fra) := \{ u(0) : u \in \MRs_\fra(H) \}$
which is a Banach space for the norm
\[
	\norm{x}_{\Tr(\fra)} := \inf\{ \norm{u}_{\MRs_\fra(H)} : u\in \MRs_\fra(H), u(0)=x \}.
\]
If $u \in \MRs_\fra(H)$ is a solution of~\eqref{eq:ncp}, it follows that $u_0 \in \Tr(\fra)$.
Conversely, if $\fra$ satisfies maximal regularity in $H$, then for each $u_0 \in \Tr(\fra)$ there exists a unique solution $u \in \MRs_\fra(H)$ of \eqref{eq:ncp}. In fact, let $u_0 \in \Tr(\fra)$ and $f \in L^2(0,T;H)$.
There exists $v \in \MRs_\fra(H)$ such that $v(0)=u_0$.
Then $g= v'+ \A(\cdot) v(\cdot) \in L^2(0,T;H)$.
By assumption, there exists $w \in \MRs_\fra(H)$ such that $w'+\A(\cdot)w(\cdot) = f-g$ and $w(0)=0$.
Thus $u:=v+w \in \MRs_\fra(H)$ solves~\eqref{eq:ncp}.

Consequently, there are two tasks:
Finding conditions on the form $\fra$ that imply maximal regularity in $H$, and then
identifying the trace space $\Tr(\fra)$. For concrete problems, the given space $V$ is known and a desirable situation occurs when $\Tr(\fra) = V$. One even would like to have $\MRs_\fra(H) \subset C([0,T];V)$.
However, these properties are not valid in general as we will see in the subsequent sections,
where diverse regularity conditions on the form will be presented.
We start with the autonomous case where we already encounter a major difficulty for identifying the trace space.
\section{Autonomous forms: Regularity}\label{sec:autonomous_forms:regularity}

Let $\fra \colon V \times V \to \K$ be an autonomous, coercive form, $\A \in \L(V,V')$ the associated operator and $A$ the part of $\A$ in $H$.
Then the form $\fra$ has maximal regularity in $H$ and
$\MRs_\fra(H) = H^1(0,T;H) \cap L^2(0,T;D(A))$.
It follows from the trace method for real interpolation that $\Tr(\fra) = (H, D(A))_{2,\frac 1 2}$~\cite[Proposition~1.13]{Lun09}, the real interpolation space
between $H$ and $D(A)$ which coincides with the complex interpolation space $[H, D(A)]_{\frac 1 2}$~\cite[Corollary~4.37]{Lun09}.
This space, in turn, coincides with $D(A^{1/2})$ because $A$ has bounded imaginary powers
(\cite[Theorem~6.6.9]{Haa06}).
Hence, in the autonomous case for each $f \in L^2(0,T;H)$ and $u_0 \in D(A^{1/2})$ there is a unique $u\in \MRs_\fra(H)$ satisfying~\eqref{eq:ncp}. By McIntosh's example in Section~\ref{sec:autonomous_forms} it may well happen that $V \not\subset D(A^{1/2})$. Then there exists $u_0 \in V$ for which the solution $u \in \MRs(V,V')$ of~\eqref{eq:ncp} is not in $H^1(0,T;H)$.

\section{A first counterexample}\label{sec:counterexample}

For a long time it was not known whether each coercive non-autonomous form has maximal regularity in $H$.
Even though Lions only asked the problem explicitly for symmetric forms (see below),
no counterexample, even to the general case, seemed to be known.
The first counterexample was given by Dier \cite{Die14}. It is based on McIntosh's example
of an autonomous form which fails the square root property.
We reproduce this example, because it is easy and shows the close link between
 the square root property and maximal regularity in $H$.
 
 \begin{example}\cite[Section~5.2]{Die14}\label{ex:counterexample_dier}
 	There exists a non-autonomous, coercive form $\fra$ for which maximal regularity in $H$ fails.
 \end{example}
 \begin{proof}
	Let $\frb \colon V \times V \to \C$ be an autonomous coercive form with associated operator $B$ on $H$ satisfying $D(B^{1/2}) \not\subset V$.
	Such a form exists by the result of McInstoh mentioned in Section~\ref{sec:autonomous_forms}.
	Let  $\frc(v,w) = \tfrac 1 2 (\frb(v,w)+ \overline{\frb(w,v)})$ be the symmetric part of $\frb$
	and $\mathcal C \in \L(V,V')$ the operator associated with $\frc$ and $C$ its part in $H$.
	Then $D(C^{1/2})=V$. Define the form $\fra \colon [0,2]\times V \times V \to \C$ by
	\[
		\fra(t,v,w):= \1_{[0,1)}(t) \frb(v,w) + \1_{[1,2]}(t) \frc(v,w).
	\]
	Then $\fra$ is a non-autonomous, coercive form.
	Let $\A(t) \in \L(V,V')$ be the associated operator and $A(t)$ its part in $H$.
	Then $A(t)=B$ for $t<1$ and $A(t)=C$ for $t\ge1$.
	Let $u_1 \in D(B^{1/2}) \setminus V$.
	Then there exists $\psi \in H^1(0,1;H)\cap L^2(0,1;D(B))$ such that $\psi (0)=u_1$ (since $\Tr(\frb)= D(B^{1/2})$, see Section~\ref{sec:autonomous_forms:regularity}).
	Let $v(t) = t \psi(1-t)$. Then $v \in H^1(0,1;H)\cap L^2(0,1;D(B))$, $v(0)=0$ and $v(1)=u_1$.
	Let $f(t) = (v'(t)+B v(t)) \mathds{1}_{[0,1)}$. Then $f \in L^2(0,2;H)$.
	Let $w \in H^1(1,2;V')\cap L^2(1,2;V)$ be the solution of 
	$w'(t)+ \mathcal C w(t) = 0$, $w(1) = u_1$.
	Then $w \not\in H^1(1,2;H)\cap L^2(1,2;D(C))$ since $u_1 \not\in V = D(C^{1/2}) = \Tr(\frc)$.
	Let $u(t) := v(t) \mathds{1}_{[0,1)} + w(t) \mathds{1}_{[1,2]}$.
	Then $u \in \MRs(V,V')$ is the solution of $u'(t)+\A(t) = f(t)$, $u(0) =0$, but $u \not\in H^1(0,2;H)$.
	Thus the form $\fra$ fails maximal regularity in $H$.
\end{proof}

\section{Symmetric forms}\label{sec:symmtric_forms}

The form in the previous example is not symmetric and continuous. Recall that an autonomous, 
symmetric form does satisfy the square root property, so a construction similar to that in Section~\ref{sec:counterexample}
is not possible.
Indeed, under an additional regularity hypothesis Lions proved the following.

\begin{theorem}[{\cite[IV Sec.~6, Théorème~6.1]{Lio61}}]
	Let $\fra \colon [0,T] \times V \times V \to \K$
	be a non-autonomous form satisfying
	\begin{itemize}
		\item[(a)] $\fra(t,v,w) = \overline{\fra(t,w,v)}$ for all $t \in [0,T]$, $v,w \in V$ \emph{(symmetriy)}
		\item[(b)] $\fra(\cdot,v,w) \in C^1([0,T])$ for all $v,w \in V$.
	\end{itemize}
	Then $\fra$ has maximal regularity in $H$.
\end{theorem}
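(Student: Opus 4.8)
The plan is to carry out the classical Galerkin method and to exploit the symmetry of $\fra(t,\cdot,\cdot)$ in the energy estimate so that $\norm{u'(t)}_H^2$ appears as a perfect square. Choose a linearly independent sequence $(e_n)_{n\ge1}$ in $V$ whose span is dense in $V$, set $V_n := \ospan\{e_1,\dots,e_n\}$, and let $u_n \in H^1(0,T;V_n)$ be the unique solution of
\[
	(u_n'(t) \mid v)_H + \fra(t,u_n(t),v) = (f(t)\mid v)_H \quad \text{for all } v \in V_n \text{ and a.e. } t, \qquad u_n(0) = 0 .
\]
This is a linear system of ordinary differential equations whose coefficients $t \mapsto \fra(t,e_i,e_j)$ are of class $C^1$ (in particular continuous) and whose inhomogeneity lies in $L^2(0,T)$, so it has a unique solution with $u_n' \in L^2(0,T;V_n)$.

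Testing with $v = u_n(t)$ and using coercivity gives the standard energy bound, and hence, using $H \hookrightarrow V'$, the uniform estimate
\[
	\norm{u_n}_{L^\infty(0,T;H)} + \norm{u_n}_{L^2(0,T;V)} \le C \norm{f}_{L^2(0,T;H)} .
\]
For the decisive estimate, test instead with $v = u_n'(t) \in V_n$ to obtain $\norm{u_n'(t)}_H^2 + \fra(t,u_n(t),u_n'(t)) = (f(t)\mid u_n'(t))_H$. Writing $u_n(t) = \sum_i c_i(t) e_i$ with $c_i \in H^1(0,T)$, the scalar function
\[
	\varphi_n(t) := \fra(t,u_n(t),u_n(t)) = \sum_{i,j} \fra(t,e_i,e_j)\, c_i(t)\, \overline{c_j(t)}
\]
is absolutely continuous, and the product rule yields $\varphi_n' = \partial_t\fra(\cdot,u_n,u_n) + \fra(\cdot,u_n',u_n) + \fra(\cdot,u_n,u_n')$, where $\partial_t\fra$ denotes the pointwise $t$-derivative of $\fra$. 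By symmetry, $\fra(t,u_n',u_n) + \fra(t,u_n,u_n') = 2\Re\fra(t,u_n,u_n')$ and $\varphi_n$ is real-valued; taking real parts in the tested equation, integrating over $[0,t]$ and using $\varphi_n(0) = \fra(0,0,0) = 0$,
\[
	\int_0^t \norm{u_n'}_H^2 \,\d s + \tfrac12 \varphi_n(t) = \tfrac12 \int_0^t \partial_t\fra(s,u_n(s),u_n(s))\,\d s + \int_0^t \Re(f \mid u_n')_H \,\d s .
\]
By coercivity $\varphi_n(t) \ge \alpha \norm{u_n(t)}_V^2 \ge 0$; the first term on the right-hand side is dominated by $M' \norm{u_n}_{L^2(0,T;V)}^2$ as soon as we know $\abs{\partial_t\fra(t,v,w)} \le M' \norm{v}_V \norm{w}_V$ uniformly in $t$; and $\int_0^t \Re(f\mid u_n')_H \,\d s$ is absorbed into the left-hand side by Young's inequality. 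Together with the previous bound this gives $\norm{u_n'}_{L^2(0,T;H)} + \norm{u_n}_{L^\infty(0,T;V)} \le C \norm{f}_{L^2(0,T;H)}$ uniformly in $n$.

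To justify the uniform bound on $\partial_t\fra$, note that for fixed $t$ the map $(v,w) \mapsto \partial_t\fra(t,v,w)$ is sesquilinear and, being a pointwise limit of bounded forms, is separately bounded by Banach--Steinhaus, hence jointly bounded on $V \times V$; and for fixed $v,w$ the function $t \mapsto \partial_t\fra(t,v,w)$ is continuous on the compact interval $[0,T]$, hence bounded. The uniform boundedness principle then produces the constant $M'$. Finally, the uniform a priori bounds allow us to extract a subsequence with $u_n \rightharpoonup u$ in $H^1(0,T;H)$ and weakly-$*$ in $L^\infty(0,T;V)$; the standard passage to the limit against test functions from $\bigcup_n V_n$ (which is dense in $V$) shows that $u$ solves \eqref{eq:ncp} with $u(0) = 0$, and by the uniqueness in Theorem~\ref{thm:Lions} this $u$ is the solution in $\MRs(V,V')$. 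Since $u' \in L^2(0,T;H)$, we conclude $u \in H^1(0,T;H)$; that is, $\fra$ has maximal regularity in $H$.

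The step I expect to be most delicate is the rigorous treatment of the time-derivative of the form --- both the chain rule for $\varphi_n$ and the uniform bound on $\partial_t\fra$ --- which becomes routine only because the Galerkin truncation has reduced everything to products of scalar $C^1$ and $H^1$ functions and to an application of the uniform boundedness principle; this is precisely why the differentiation of $\fra$ is performed after, not before, the approximation.
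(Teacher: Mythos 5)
Your Galerkin argument is correct and is essentially the classical proof of Lions, which the paper only cites ([Lio61, IV Sec.~6, Théorème~6.1]) without reproducing: one tests the Galerkin equation with $u_n'$, rewrites $\Re\fra(t,u_n,u_n')$ via the time-derivative of $\fra(t,u_n(t),u_n(t))$ using symmetry, and gets the extra a priori bound on $\norm{u_n'}_{L^2(0,T;H)}$ and $\norm{u_n}_{L^\infty(0,T;V)}$. The uniform boundedness argument you give for $\partial_t\fra$ and the careful placement of differentiation after the finite-dimensional truncation are exactly the points where the $C^1$ hypothesis and symmetry enter, so the proposal is a faithful rendering of the standard proof.
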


Lions \cite[p.~68]{Lio61} asks whether this result remains true if the form is merely continuous or even does not satisfy any regularity
besides our general assumption of measurablility.
Fackler recently gave a negative answer to Lions' problem.

\begin{theorem}[\cite{Fac16c}]\label{thm:counterexample_lions}
	There exists a coercive, symmetric, non-autonomous form $\fra$ satisfying
	\[
		\abs{\fra(t,v,w)- \fra(s,v,w)} \le K \abs{t-s}^{1/2} \norm{v}_V \norm{w}_V
	\]
	for all $v,w \in V$, $t,s \in [0,T]$ and some constant $K>0$
	which does not satisfy maximal regularity in $H$.
\end{theorem}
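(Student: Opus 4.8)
The plan is to construct the form on a carefully chosen pair $V \hookrightarrow H$ by superimposing, over a sequence of shrinking time intervals accumulating at a point, elementary "rotation" building blocks, each of which produces a controlled amount of growth of $\norm{\A(\cdot)u(\cdot)}_{L^2(0,T;H)}$ relative to the data. The Hölder exponent $1/2$ is precisely the borderline: the heuristic is that on a time interval of length $h$ the fundamental solution of an autonomous problem regularizes like $t^{-1/2}$ in the graph norm, so a perturbation of the form of size $\sim h^{1/2}$ can interact resonantly with this scaling and destroy the $H$-bound while still being summable in the Hölder seminorm.

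Concretely, I would first fix a self-adjoint positive operator with a suitable spectral decomposition on $H = \ell^2$ (or $L^2$) and build $V$ as its form domain, then choose for each scale $n$ a finite-dimensional block on which the form $\fra(t,\cdot,\cdot)$ is given by conjugating a fixed diagonal coercive form by a time-dependent rotation $e^{t N_n}$ with $N_n$ skew-adjoint; symmetry of $\fra(t,\cdot,\cdot)$ is arranged by instead rotating the \emph{metric}, i.e.\ defining $\fra(t,v,w) = \frb(e^{\theta_n(t) S_n} v, e^{\theta_n(t) S_n} w)$ for a fixed symmetric coercive $\frb$, an angle function $\theta_n$ supported on the $n$-th interval, and a self-adjoint unit-norm generator $S_n$, so that coercivity and boundedness are uniform. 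The second step is the quantitative heart: on the $n$-th interval one solves the (NCP) with a chosen $f_n \in L^2(H)$ concentrated there and shows, by an explicit computation in the two- or finitely-many-dimensional block (diagonalizing and reducing to a scalar ODE with a rotating potential), that $\norm{u'}_{L^2(H)}$ on that block exceeds $c \cdot \omega_n \cdot \norm{f_n}_{L^2(H)}$ where $\omega_n \to \infty$ is the spectral parameter tuned to the interval length via $\omega_n h_n \sim 1$. Step three is to sum: choose the interval lengths $h_n$ and amplitudes so that $\sum \norm{f_n}_{L^2}^2 < \infty$ (so $f \in L^2(0,T;H)$) while $\sum \omega_n^2 \norm{f_n}_{L^2}^2 = \infty$, and verify that the glued $u$ still lies in $\MRs(V,V')$ (which is automatic from Lions' Theorem~\ref{thm:Lions} applied to the assembled $f$) but not in $H^1(0,T;H)$; finally, check the Hölder-$1/2$ bound on $\fra$ by estimating $\abs{\theta_n(t)-\theta_n(s)}$ against $\abs{t-s}^{1/2}$, which forces $\norm{\theta_n}_{C^{1/2}} \lesssim h_n^{-1/2}$ and hence dictates the allowable amplitude of the rotation on each scale.

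The main obstacle I anticipate is the interaction between scales together with the precise matching of three competing constraints — uniform coercivity/boundedness of $\fra$, summability of $f$ in $L^2(0,T;H)$, and non-summability of the $H$-graph norm of $u$ — all while keeping the $C^{1/2}$-seminorm of $\fra$ finite. Getting a genuine \emph{lower} bound (not just an upper bound) on $\norm{u'}_{L^2(0,T;H)}$ for the assembled solution is delicate: one must ensure the blocks essentially decouple so that contributions from different scales do not cancel, which I would handle by putting the $n$-th block in an eigenspace of $\frb$ with eigenvalue $\omega_n$ chosen lacunary, so that the rotations act within orthogonal pieces of $H$ and the $H^1(0,T;H)$-norm literally splits as a sum over $n$. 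A secondary technical point is that the building-block computation must be done carefully enough to see the $1/2$-threshold — a cruder estimate would only give a worse Hölder exponent — so I would reduce, inside each block, to an explicitly solvable $2\times 2$ constant-in-rotation-speed system and extract the resonant growth by a direct integration rather than a perturbative argument.
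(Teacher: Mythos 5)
The paper does not prove Theorem~\ref{thm:counterexample_lions}; it cites Fackler's paper \cite{Fac16c} and only sketches the underlying construction indirectly, in the proof of the Proposition in Section~\ref{sec:critical}. From that sketch one sees that Fackler's argument runs in the opposite direction from yours: one fixes $V = L^2(0,\tfrac12,w)$ with a concrete weight $w(x)=(x\abs{\log x})^{-3/2}$ and $H = L^2(0,\tfrac12)$, writes down an \emph{explicit} prospective solution $u(t,x) = c(x)(\sin(t\varphi(x))+d)$ with $c(x)=x\abs{\log x}$, $\varphi=w$, and then reverse-engineers a symmetric coercive form $\fra$, an initial datum $u_0\in V$, and a right-hand side $f\in L^2(0,T;V)$ for which this $u$ solves the Cauchy problem. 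Because $u$ is explicit, the failure of $u'\in L^2(0,T;H)$ is a direct one-variable computation (one checks $c\varphi\notin L^2$), and the time regularity of $\fra$ is likewise read off directly from $u$. The spatial variable $x\to 0$ plays the role of your discrete block index $n$, and $\varphi(x)$ plays the role of your spectral parameter $\omega_n$, so your multiscale picture — shrinking blocks, frequency tuned inversely to the time scale, $1/2$ as the critical Hölder exponent — is the right heuristic; but the execution is quite different.

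Your direct approach has a genuine gap that I do not think you can paper over: the claimed per-block lower bound $\norm{u'}_{L^2(H)}\gtrsim \omega_n\norm{f_n}_{L^2(H)}$ is asserted, not derived, and with the mechanism you describe it is not clear it holds. If $\frb$ acts as $\omega_n I$ on the $n$-th block and you set $\fra(t,\cdot,\cdot)=\frb(e^{\theta_n(t)S_n}\cdot,e^{\theta_n(t)S_n}\cdot)$ with $S_n$ self-adjoint, then $\A(t)=\omega_n e^{2\theta_n(t)S_n}$ is simultaneously diagonalizable with $S_n$ for all $t$, so the ODE on the block decouples into scalar equations with a time-dependent coefficient of modest total variation $\sim h_n^{1/2}$, and there is no rotational resonance to exploit — you are just perturbing a scalar coefficient. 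You would need a genuinely non-commuting pair (e.g.\ $\frb$ with two distinct eigenvalues on the block and $S_n$ off-diagonal), and then the explicit $2\times 2$ computation you defer is exactly where the hard work lies: getting a \emph{lower} bound on the $H$-graph norm of the solution to a non-autonomous equation, rather than an explicit solution whose norm you can just compute, is much more delicate, and this is precisely what the reverse-engineering in \cite{Fac16c} sidesteps. A secondary issue is the Hölder bookkeeping: the constraint is that $\norm{\theta_n}_{C^{1/2}}$ be \emph{uniformly bounded} in $n$, which forces the amplitude to satisfy $\epsilon_n\lesssim h_n^{1/2}$; your statement that the Hölder bound ``forces $\norm{\theta_n}_{C^{1/2}}\lesssim h_n^{-1/2}$'' has the logic inverted. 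These are the two places where your proposal, as written, does not yet constitute a proof.
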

Thus, Lions' problem (exactly as formulated by Lions) has a negative answer even
for a symmetric non-autonomous form which is Hölder continuous in time.
The Hölder index $\frac 1 2$ is the worst possible case as we will see in the next section.

\section{Hölder regularity}

If the form is Hölder continuous of index $\beta>\frac 1 2$, then it has maximal regularity in $H$.
In the following result by Ouhabaz and Spina it is remarkable that the hypothesis of symmetry is no longer needed.
\begin{theorem}[{\cite{OuhSpi10}}]\label{thm:ouhabaz_spina}
	Let $\fra$ be a non-autonomous, coercive form such that
	\[
		\abs{\fra(t,v,w)- \fra(s,v,w)} \le K \abs{t-s}^\beta \norm{v}_V \norm{w}_V
	\]
	for all $v,w \in V$, $t,s \in [0,T]$ and some constants $K>0$ and $\beta > \frac 1 2$.
	Then $\fra$ has maximal regularity in $H$.
\end{theorem}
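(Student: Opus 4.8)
The plan is to reduce the assertion to an a priori estimate of the form
\[
	\norm{\A(\cdot)u(\cdot)}_{L^2(0,T;H)} \le C \norm{f}_{L^2(0,T;H)}
\]
for the solution $u \in \MRs(V,V')$ of~\eqref{eq:ncp} with $u_0 = 0$, since once such an estimate is available on a short interval one can iterate and then the standard closed-graph/approximation argument (freeze the form, use the autonomous theory from Section~\ref{sec:autonomous_forms:regularity} where maximal regularity in $H$ holds, then perturb) upgrades $u$ from $\MRs(V,V')$ to $H^1(0,T;H)$. First I would recall the representation of the solution via the evolution family or, more directly, work with the identity obtained by testing the equation against $A(t)u(t)$ — or, since $A(t)u(t)$ need not be known to lie in $H$ a priori, against $\A(t)u(t)$ interpreted through the form. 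The coercivity gives control of $\int_0^T \norm{\A(t)u(t)}_{V'}^2\,\d t$ and of $\sup_t \norm{u(t)}_H$, which is the Lions a priori bound; the point is to bootstrap this to an $H$-bound.

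The key step is the following commutator-type estimate. Write $u(t) = \int_0^t S(t,s)f(s)\,\d s$ where $S(t,s)$ is the evolution family generated by $-\A(\cdot)$ on $V'$, and split
\[
	\A(t)u(t) = \int_0^t \A(t) S(t,s) f(s)\,\d s = \int_0^t \A(s) S(t,s) f(s)\,\d s + \int_0^t (\A(t)-\A(s)) S(t,s) f(s)\,\d s.
\]
For the first integral one uses that in the autonomous case $\A(s)S(t,s)$ behaves like $\A e^{-(t-s)\A}$, which has $L^2 \to L^2$ operator-norm kernel bound $\frac{C}{t-s}$ composed with the smoothing $e^{-(t-s)\A}\colon H \to D(A)$; this is precisely autonomous maximal regularity (available from the earlier section) plus a perturbation/frozen-coefficient argument. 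For the second integral the Hölder hypothesis gives
\[
	\norm{(\A(t)-\A(s))S(t,s)f(s)}_H \le K\abs{t-s}^\beta \norm{S(t,s)f(s)}_V \cdot(\text{domain factor}),
\]
and the crucial analytic-semigroup estimate $\norm{\A(s)^{1/2} S(t,s)}_{H\to H} \le \frac{C}{(t-s)^{1/2}}$ together with $\norm{(\A(t)-\A(s))v}_{V'} \le K\abs{t-s}^\beta \norm{v}_V$ converts this into a kernel of order $\abs{t-s}^{\beta - 1/2}$, which is integrable in $L^2$ sense on $[0,T]$ precisely because $\beta > \frac12$. Assembling the two pieces via a singular-integral (Young/Schur) estimate yields the desired bound with a constant that can be made small by shrinking $T$, which legitimises the iteration.

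The main obstacle is making rigorous the frozen-coefficient comparison: $S(t,s)$ is the non-autonomous propagator, and controlling $\A(s)S(t,s)$ requires comparing it with the analytic semigroup $e^{-(t-s)\A(s)}$ generated by the operator frozen at time $s$, then estimating the difference by a Duhamel/Volterra series whose kernel involves exactly the same $\abs{t-s}^{\beta-1}$-type singularity — so one must check that this perturbation series converges in the right norm, again using $\beta > \frac12$. A secondary technical point is that all these estimates have to be carried out in $V'$ and in $H$ simultaneously and then interpolated (using $D(\A(s)^{1/2}) = H$-type bounds, i.e.\ the uniform-in-$s$ boundedness of the $H^\infty$-calculus or of imaginary powers, which for a general — non-symmetric — coercive form is not free and is one reason the symmetric case was historically settled first); here one leans on the fact that a coercive form always generates a holomorphic semigroup with the square-function estimates needed for the fractional-power bounds. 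Once these uniform-in-time analytic bounds and the convergence of the Duhamel series are in place, the rest is a routine singular-integral computation and the closed-graph passage from the $\MRs(V,V')$-solution to $H^1(0,T;H)$.
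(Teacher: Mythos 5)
The paper itself does not prove this theorem; it only cites Ouhabaz--Spina~\cite{OuhSpi10} and records that their proof rests on the Hieber--Monniaux result for evolution families under the Acquistapace--Terreni condition~\cite{HieMon00b}, which in turn needs an $L^2$-boundedness theorem for pseudodifferential operators with operator-valued, non-smooth symbols~\cite{PorStr06,HytPor08,Yam86}. Your outline is in the same general spirit — evolution family, frozen-coefficient comparison, Hölder gain from $\beta>\tfrac12$ — but it contains one genuine gap that is not a technicality: it is precisely the place where the pseudodifferential machinery that the survey points to is indispensable.

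The gap is the treatment of the main term. After the frozen-coefficient reduction you are left (in whichever variable you freeze) with a singular integral of the type $t\mapsto\int_0^t\A(t)\me^{-(t-s)\A(t)}f(s)\,\d s$, whose kernel has operator norm of order $(t-s)^{-1}$ on $H$. That kernel is \emph{not} integrable, so your plan to ``assemble the two pieces via a singular-integral (Young/Schur) estimate'' cannot close the argument for this piece: Young/Schur requires an integrable kernel and here it fails by a logarithm. $L^2$-boundedness of this operator uses genuine cancellation — in the autonomous case one sees it via Plancherel (de Simon's proof), and in the $t$-dependent case one needs a Calderón--Zygmund or pseudodifferential theorem for rough operator-valued symbols, which is exactly what Hieber--Monniaux import. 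Your phrase ``autonomous maximal regularity plus a perturbation/frozen-coefficient argument'' papers over this, because once $\A(\cdot)$ depends on the outer variable $t$ the operator is no longer a convolution and autonomous $L^2$-maximal regularity no longer applies pointwise. A secondary issue: splitting $\A(t)=\A(s)+(\A(t)-\A(s))$ inside the representation $u(t)=\int_0^t S(t,s)f(s)\,\d s$ leaves the error term $(\A(t)-\A(s))S(t,s)f(s)$ living in $V'$, not $H$, since $\A(t)-\A(s)\in\L(V,V')$; to land in $H$ you must instead Duhamel-expand $S(t,s)$ against the frozen semigroup $\me^{-(t-s)\A(t)}$ so that the factor $\A(t)\me^{-(t-r)\A(t)}\colon V'\to H$ with norm $\sim(t-r)^{-3/2}$ absorbs the missing half-derivative, and then $\beta>\tfrac12$ makes the resulting kernel $(t-r)^{\beta-3/2}$ integrable. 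That is the Acquistapace--Terreni bookkeeping, and it works for the error term; but the main term still needs the pseudodifferential/singular-integral theorem, which your proposal does not supply.

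A correct completion along your lines would therefore either (i) invoke the operator-valued pseudodifferential theorem of \cite{HytPor08} (or \cite{PorStr06}, \cite{Yam86}) for the symbol $\A(t)(\A(t)+\mathrm{i}\tau)^{-1}$, after verifying the required symbol regularity from the Hölder hypothesis and the uniform sectoriality coming from coercivity; or (ii) bypass the representation formula altogether, which is how the later, more elementary proofs in the fractional-Sobolev setting (Theorem~\ref{thm:DZ}, \cite{DieZac16}) proceed — they run a Lax--Milgram argument in the maximal-regularity space rather than estimating the propagator directly.
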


An $L^p$-version of maximal regularity in $H$ is proved by Haak and Ouhabaz \cite{HaaOuh15}. They also show that $D(A(0)^{1/2})$ is contained in the trace space, thus obtaining a final result in the Hölder scale. The original proof in \cite{OuhSpi10} is based on a result by Hieber--Monniaux \cite{HieMon00b} on non-autonomous evolution equations
satisfying the Aquistapace--Terreni condition. This, in turn, needs a boundedness result for pseudodifferential operators. We refer to~\cite[Theorem~5]{PorStr06} and~\cite[Theorem~17]{HytPor08}, which extend a scalar-valued characterization in~\cite[Theorem~2]{Yam86}.

We now assume that $\fra$ satisfies the square root property uniformly. Then it makes sense to ask whether the embedding $\MRs_{\fra}(H) \hookrightarrow C([0,T];V)$ holds true as in the autonomous case. Indeed, if $\fra$ is a \emph{Lipschitz continuous form}, i.e.\ $\beta = 1$, then a positive answer is contained in~\cite{ADLO14}. More recently, this was generalized to  $\beta > \frac{1}{2}$ by Achache and Ouhabaz~\cite[Theorem~4.2]{AchOuh16}.

\section{Bounded variation}\label{sec:bv}

Another regularity condition, weaker than Lipschitz continuity and not comparable to Hölder continuity, is boundedness of the variation.
A non-autonomous form $\fra$ is of \emph{bounded variation} if
\begin{equation*}
	\sup_{(\tau_k)} \sum_{k=1}^n \norm{\mathcal{A}(\tau_{k}) - \mathcal{A}(\tau_{k-1})} < \infty,
\end{equation*}
where the supremum is taken over all finite partitions $0 = \tau_0 < \tau_1 < \ldots < \tau_{n-1} < \tau_n = T$ of $[0,T]$, or equivalently, there exists $g \colon [0,T] \to \R$ non-decreasing with 
\[
	 \abs{\fra(t,v,w)- \fra(s,v,w)} \le (g(t)-g(s)) \norm{v}_V \norm{w}_V \qquad \text{for all } t,s \in [0,T], v,w \in V.
\]
Let $\fra$ be a coercive, bounded non-autonomous form of bounded variation.

\begin{theorem}[\cite{Die15}]\label{thm:dier_bv}
	If $\fra$ is symmetric, then $\fra$ has maximal regularity in $H$ and $\MRs_{\fra}(H) \hookrightarrow C([0,T];V)$.
\end{theorem}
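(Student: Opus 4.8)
The plan is to reduce the problem to the autonomous case via a perturbation/approximation argument that exploits the monotone structure of the variation function $g$. First I would fix the symmetric coercive forms $\fra(t,\cdot,\cdot)$ and note that, since $\fra$ is symmetric and coercive, each $A(t)$ has the square root property $D(A(t)^{1/2}) = V$ with norm equivalence constants uniform in $t$ (this follows from the computation in Section~\ref{sec:autonomous_forms} applied pointwise, using only $\alpha$ and $M$). The natural object to control is then $t \mapsto A(t)^{1/2} u(t)$ in $L^2(0,T;H)$, which is equivalent to controlling $u$ in $L^2(0,T;V)$; the extra information we must extract from bounded variation is that this quantity lies in $H^1$, equivalently that $u \in H^1(0,T;H)$.

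The key step is an energy estimate obtained by testing the equation $u'(t) + \A(t)u(t) = f(t)$ against $\A(t)u(t)$, or rather against the $H$-valued function $A(t)u(t)$ once we know it exists. For the symmetric form one has the identity $\Re(\A(t)u(t) \mid u'(t))_{V',V}$ relating to $\tfrac{1}{2}\tfrac{d}{dt}\fra(t,u(t),u(t))$ up to the ``frozen'' term $\tfrac12 \dot\fra(t,u(t),u(t))$; with only bounded variation this derivative is a measure, and the crucial point is that the total variation bound $|\fra(t,v,v) - \fra(s,v,v)| \le (g(t)-g(s))\|v\|_V^2$ lets one absorb the resulting Stieltjes integral $\int \|u(s)\|_V^2 \, dg(s)$. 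I would run this estimate first on a smooth (e.g.\ piecewise constant in $t$) approximation $\fra_n$ of $\fra$ whose variation is bounded by that of $\fra$, obtain the a priori bound $\|u_n\|_{H^1(0,T;H)}^2 + \|A_n(\cdot)u_n\|_{L^2}^2 \le C(\|f\|_{L^2(0,T;H)}^2 + \text{Var}(\fra)\,\sup_t\|u_n(t)\|_V^2)$, close it using a Gronwall argument for Stieltjes integrals together with the fact that $\sup_t\|u_n(t)\|_V^2$ itself can be controlled once $u_n \in \MRs_{\fra_n}(H)$, and then pass to the limit. The embedding $\MRs_\fra(H) \hookrightarrow C([0,T];V)$ comes out of the same estimate: the identity shows $t \mapsto \fra(t,u(t),u(t))$, hence $t \mapsto \|u(t)\|_V^2$, is of bounded variation and right-continuous, and combined with weak continuity of $u(\cdot)$ into $V$ (which follows from $u \in H^1(0,T;H) \cap L^\infty(0,T;V)$) one upgrades to norm continuity in $V$.

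The main obstacle I expect is making the Stieltjes/Gronwall closure of the energy estimate rigorous when $g$ has jumps: the term $\int_0^T \|u(s)\|_V^2\,dg(s)$ must be estimated by something like $\sup_{s\le t}\|u(s)\|_V^2 \cdot (g(t)-g(0))$ and then reabsorbed, which requires the constant in front to be small — so one likely needs to first localize on subintervals where the oscillation of $g$ is less than $\varepsilon$ (finitely many, by bounded variation and right-continuity, after removing the at most countably many jumps), prove maximal regularity on each such piece with a genuinely small perturbation argument around a frozen autonomous form, and then patch the pieces together using that $\Tr(\frc) = D(C^{1/2}) = V$ for the frozen symmetric forms $\frc$ so that the solution at each interface lies in the correct trace space. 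A secondary technical point is justifying the test function computation itself (the function $A(t)u(t)$ is only known to be in $L^2(0,T;H)$ a posteriori), which is handled in the standard way by first working with the Galerkin approximations or the regularized forms $\fra_n$ where everything is smooth, and only passing to the limit at the end.
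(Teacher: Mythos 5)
Your strategy matches the line of proof in the cited work \cite{Die15}: an energy estimate obtained by pairing with $A(t)u(t)$, the uniform square root property $D(A(t)^{1/2})=V$ coming from symmetry, approximation by piecewise constant forms with controlled variation, a Stieltjes--Gronwall closure via localization to subintervals of small variation, and patching through the trace space. The paper itself only cites \cite{Die15} for this theorem and singles out the embedding $\MRs_\fra(H)\hookrightarrow C([0,T];V)$ as the main difficulty, so let me focus there.

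That is exactly the step where your argument, as written, does not close. You claim: the energy identity shows $t\mapsto\fra(t,u(t),u(t))$, hence $t\mapsto\normalnorm{u(t)}_V^2$, is of bounded variation and right-continuous, and combined with weak continuity of $u(\cdot)$ into $V$ this upgrades to norm continuity. That implication is false. A BV, right-continuous scalar function can have left jumps (e.g.\ $\mathds{1}_{[1,2]}$), and ``weak convergence plus convergence of norms $\Rightarrow$ strong convergence'' requires the norms to actually converge, not merely to have one-sided limits. What you need is genuine continuity of $t\mapsto\normalnorm{u(t)}_V^2$, and BV plus right continuity simply does not provide it. The missing piece is two-sided: weak lower semicontinuity of $\normalnorm{\cdot}_V$ gives $\normalnorm{u(t_0)}_V^2\le\liminf_{t\nearrow t_0}\normalnorm{u(t)}_V^2$, while the reverse inequality has to be extracted from the precise form of the energy identity at the jump points of $g$ --- one must show that the jump of $t\mapsto\fra(t,u(t),u(t))$ equals the jump of the form evaluated at the fixed element $u(t_0)$, which is where the interaction between the Stieltjes measure $\mathrm{d}g$ and the symmetry/square-root structure is really used. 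This identification at jumps is a short but nontrivial argument and is the heart of the embedding; your proposal lists the ingredients but does not carry out the decisive comparison, which is precisely the part the survey flags as the main difficulty of the result.
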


The inclusion of $\MRs_\fra(H)$ in $C([0,T];V)$ is the main difficulty in the result. We mention that El-Mennaoui and Laasri \cite{MenHaf16} showed that for symmetric forms of bounded variation the solution can be approximated by the solutions of piecewise autonomous approximating problems. The following result shows that in Theorem~\ref{thm:dier_bv} the symmetry condition can be relaxed (keeping the condition of bounded variation).

\begin{theorem}[\cite{Fac16d}]
	If $\fra$ satisfies a parameterized variant of the square root property (see~\cite[Definition~2.3]{Fac16d}), then $\fra$ has maximal regularity in $H$. 
\end{theorem}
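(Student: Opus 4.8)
The plan is to combine the autonomous regularity theory of Section~\ref{sec:autonomous_forms:regularity} with an approximation of $\fra$ by forms that are piecewise constant in time, the bounded variation hypothesis being exactly what keeps the approximation error under control independently of the number of pieces. Note first that a symmetric coercive form automatically enjoys the (parameterized) square root property with constants depending only on $M$ and $\alpha$, so the assertion contains the maximal regularity part of Theorem~\ref{thm:dier_bv}; the argument sketched here is meant to run parallel to the symmetric one in~\cite{Die15}, with the square root property taking over the role of symmetry.

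Fix a partition $0 = \tau_0 < \tau_1 < \dots < \tau_N = T$ and let $\fra_N$ be the form given by $\fra_N(t,\cdot,\cdot) = \fra(\tau_{k-1},\cdot,\cdot)$ for $t\in[\tau_{k-1},\tau_k)$. On each subinterval $\A_N(\cdot)$ is the constant operator $\A(\tau_{k-1})$, and by Section~\ref{sec:autonomous_forms:regularity} every coercive autonomous form has maximal regularity in $H$, with trace space $D(A(\tau_{k-1})^{1/2})$ and $\MRs_{\fra(\tau_{k-1})}(H)$ embedded continuously into $C([0,T];D(A(\tau_{k-1})^{1/2}))$. The square root property is used first for the \emph{gluing}: since $D(A(\tau_{k-1})^{1/2}) = V$ for every $k$, the solution on $[\tau_{k-1},\tau_k]$ with initial value $u_N(\tau_{k-1})$ belongs to $C([\tau_{k-1},\tau_k];V)$, so its value at $\tau_k$ again lies in $V = D(A(\tau_k)^{1/2}) = \Tr(\fra(\tau_k))$ and one may continue; concatenating yields the solution $u_N\in H^1(0,T;H)$ of~\eqref{eq:ncp} for $\fra_N$ with $u_N(0)=0$, so in particular $\fra_N$ has maximal regularity in $H$. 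Testing with $u_N$ gives $\norm{u_N}_{\MRs(V,V')} \le C\norm{f}_{L^2(0,T;H)}$ with $C$ independent of the partition; and refining the partitions so that the mesh tends to $0$, one has $\A_N(\cdot)\to\A(\cdot)$ strongly in $\L(V,V')$ for a.e.\ $t$ (away from the at most countably many jumps of the non-decreasing majorant $g$), whence $u_N\to u$ weakly in $\MRs(V,V')$, where $u$ is the $\MRs(V,V')$-solution of~\eqref{eq:ncp} for $\fra$.

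The crux is a uniform a priori bound $\norm{\A_N(\cdot)u_N(\cdot)}_{L^2(0,T;H)} \le C\norm{f}_{L^2(0,T;H)}$, with $C$ independent of $N$. Granting it, a weak limit of $\A_N(\cdot)u_N(\cdot)$ in $L^2(0,T;H)$ exists along a subsequence and is identified with $\A(\cdot)u(\cdot)$ (which is already the weak limit in $L^2(0,T;V')$), so $u\in H^1(0,T;H)$ and $\fra$ has maximal regularity in $H$. To obtain the bound one tests the piecewise autonomous equation with $\A_N(t)u_N(t)\in H$, giving, on $[\tau_{k-1},\tau_k)$,
\[
	\int_{\tau_{k-1}}^{\tau_k}\norm{\A(\tau_{k-1})u_N}_H^2\,\d t + \int_{\tau_{k-1}}^{\tau_k}\Re\, (u_N' \mid \A(\tau_{k-1})u_N)_H\,\d t = \int_{\tau_{k-1}}^{\tau_k}\Re\, (f \mid \A(\tau_{k-1})u_N)_H\,\d t.
\]
If $\fra$ is symmetric, self-adjointness of $A(\tau_{k-1})$ turns the middle integrand into $\tfrac12\tfrac{\d}{\d t}\norm{A(\tau_{k-1})^{1/2}u_N(t)}_H^2$; summing over $k$, the boundary terms telescope, the $t=0$ term vanishes since $u_N(0)=0$, the $t=T$ term is nonnegative by coercivity, and the errors at the partition points are $\tfrac12\sum_k\bigl|\fra(\tau_{k-1},u_N(\tau_k),u_N(\tau_k)) - \fra(\tau_k,u_N(\tau_k),u_N(\tau_k))\bigr| \le \tfrac12\,(g(T)-g(0))\sup_t\norm{u_N(t)}_V^2$, a quantity bounded by the total variation and \emph{not} by $N$. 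A companion estimate — propagating $\norm{A(\tau_{k-1})^{1/2}u_N(\tau_{k-1})}_H^2$ through each piece using $\Re(f \mid Au_N)_H - \norm{Au_N}_H^2 \le \tfrac14\norm{f}_H^2$, and converting the square root norm at $\tau_k$ from $\fra(\tau_{k-1})$ to $\fra(\tau_k)$ — gives $\sup_t\norm{u_N(t)}_V^2 \lesssim e^{\const\cdot\mathrm{Var}(\fra)}\norm{f}_{L^2(0,T;H)}^2$ uniformly in $N$, because now one sums the \emph{squared} norms $\norm{f}_{L^2(\tau_{k-1},\tau_k;H)}^2$ (which telescope) against factors bounded by $e^{\const\cdot\mathrm{Var}(\fra)}$. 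Feeding this back closes the estimate.

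The main obstacle — and the point at which the parameterized square root property of~\cite[Definition~2.3]{Fac16d} is essential — is the middle integrand $\Re(u_N'\mid\A(\tau_{k-1})u_N)_H$ when $\fra$ is not symmetric: self-adjointness is gone, this is no longer a perfect time derivative, and the plain identification $D(A(t)^{1/2})=V$ is not enough. What is needed is the quantitative content of~\cite[Definition~2.3]{Fac16d}: it controls, uniformly and with a dependence on $\norm{\A(t)-\A(s)}$ that is summable under bounded variation, the square root norms of $A(t)$ \emph{and} of $A(t)^*$ against the norm of $V$, and it is precisely this that lets one replace $\tfrac12\tfrac{\d}{\d t}\norm{A(\tau_{k-1})^{1/2}u_N(t)}_H^2$ by an expression whose telescoping error over the partition is again $O(\mathrm{Var}(\fra))$, independent of $N$. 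Once that replacement is in place, the two estimates above carry over and produce the uniform bound, hence maximal regularity in $H$.
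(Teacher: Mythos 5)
This theorem is stated in the survey only with a citation to~\cite{Fac16d}; the paper does not reproduce a proof, so there is nothing here to compare against line by line. I will therefore assess your sketch on its own merits.

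Your strategy — piecewise-constant-in-time approximation, maximal regularity on each autonomous piece, gluing via $D(A(\tau_k)^{1/2})=V$, and a telescoping energy estimate with errors controlled by the total variation — is essentially the El-Mennaoui--Laasri/Dier route for \emph{symmetric} forms of bounded variation, and your computation there is sound. But the theorem you are supposed to prove is exactly the one that drops symmetry, and that is where there is a real gap, at the single step you flag as ``the main obstacle'' but do not actually resolve.

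Here is the problem concretely. After testing the piecewise-autonomous equation with $\A(\tau_{k-1})u_N(t)\in H$, the middle term is $\Re\,(u_N'\mid A(\tau_{k-1})u_N)_H = \Re\,\fra(\tau_{k-1},u_N,u_N')$. Decomposing $\fra=\fra_s+\fra_a$ into symmetric and antisymmetric parts, one finds
\begin{equation*}
\Re\,\fra(\tau_{k-1},u_N,u_N') = \tfrac12\tfrac{\d}{\d t}\,\fra_s(\tau_{k-1},u_N,u_N) + \Re\,\fra_a(\tau_{k-1},u_N,u_N').
\end{equation*}
The first summand telescopes as you say. The second summand does \emph{not} telescope, is not a total time derivative, and involves $u_N'$ paired against $u_N$ through the form. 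At the level of the a priori estimate $u_N'$ is only controlled in $L^2(0,T;V')$ (or, after the fact, in $L^2(0,T;H)$ — but that is precisely what one is trying to prove), so a bound of the type $\abs{\fra_a(\tau_{k-1},u_N,u_N')}\le M\norm{u_N}_V\norm{u_N'}_V$ is useless: $u_N'$ is not in $V$. The parameterized square root property of~\cite[Definition~2.3]{Fac16d} gives uniform, $t$- and $\lambda$-stable equivalences $\norm{(A(t)+\lambda)^{1/2}v}_H\simeq\norm{v}_V\simeq\norm{(A(t)^*+\lambda)^{1/2}v}_H$; it says nothing a priori about the bilinear pairing $\fra_a(\tau_{k-1},u_N,u_N')$, and you do not explain how it could. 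Saying that it ``lets one replace $\tfrac12\tfrac{\d}{\d t}\norm{A(\tau_{k-1})^{1/2}u_N(t)}_H^2$ by an expression whose telescoping error is $O(\mathrm{Var}(\fra))$'' is an assertion, not an argument — and it is exactly the assertion the theorem is supposed to establish. As it stands, your sketch proves (modulo the technical weak-limit identification) the symmetric Theorem~\ref{thm:dier_bv}, not the cited generalization; to go beyond symmetry one needs a genuinely different mechanism for absorbing the antisymmetric part — in~\cite{Fac16d} this is done by operator-theoretic/square-function arguments built around the uniform functional calculus of the $A(t)+\lambda$, not by the telescoping energy identity you use here.

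Two smaller points. First, when passing to the limit you need the piecewise-constant coefficients to converge in a sense compatible with identifying the limit $\A(\cdot)u(\cdot)$; pointwise a.e.\ convergence of $\A_N(t)$ in operator norm is fine for BV coefficients with left-continuous representatives, but this should be stated rather than implied. Second, the companion $C([0,T];V)$ estimate you invoke is itself the hard part of Theorem~\ref{thm:dier_bv} (the survey says so explicitly), and it is again proved there under symmetry; invoking it for the non-symmetric case begs the question in the same way.
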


The parameterized variant of the square root property is, for example, satisfied for elliptic operators on bounded Lipschitz domains with Neumann or Dirichlet boundary conditions as a consequence of~\cite{AusTch03}.

\section{Fractional Sobolev regularity}

Hölder continuity of order $\beta > \frac 1 2$ and bounded variation are two different non-comparable regularity conditions. 
The following result introduces a new regularity condition on the form which generalizes Hölder continuity and almost contains bounded variation.
Suppose that the operator $\A(\cdot)$ associated to $\fra$ belongs to the \emph{homogeneous fractional Sobolev space} $\mathring W^{1/2+\delta, 2}(I; \mathcal{L}(V,V'))$ for some $\delta>0$; i.e.,
	\[
		\int_I\int_I \frac{\norm{\A(t)-\A(s)}_{\mathcal{L}(V,V')}^2}{\abs{t-s}^{2+2\delta}}  \  \mathrm{d}{t}\ \mathrm{d}{s} < \infty.
	\]
\begin{theorem}[\cite{DieZac16}]\label{thm:DZ}
	Then $\fra$ satisfies maximal regularity in $H$ and $\Tr(\fra) = D(A(0)^{1/2})$.
	 Moreover, $\MRs_\fra(H)$ embeds continuously in $H^{1/2}(I;V)$.
\end{theorem}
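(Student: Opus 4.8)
The plan is to reduce the whole statement to one a priori estimate; maximal regularity in $H$, the trace identification and the $H^{1/2}$‑bound will then follow. Write $I=(0,T)$ and let $A_0:=A(0)$ be the part in $H$ of the operator associated with $\fra(0,\cdot,\cdot)$; by Section~\ref{sec:autonomous_forms} every $A(t)$ comes from a coercive form, hence has a bounded $H^\infty$‑calculus on $H$, in particular maximal $L^2$‑regularity on $I$, with constants depending only on $\alpha$ and $M$. Since $\{\A(t)\}$ is uniformly bounded and coercive it generates an evolution family $(U(t,s))_{0\le s\le t\le T}$ on $H$, and for $u_0=0$ the solution of \eqref{eq:ncp} is $u(t)=\int_0^t U(t,s)f(s)\,\d s$, which by Theorem~\ref{thm:Lions} is its unique $\MRs(V,V')$‑representative. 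So it suffices to prove
\begin{equation*}
	\norm{\A(\cdot)u(\cdot)}_{L^2(I;H)} \le C\,\norm{f}_{L^2(I;H)}, \tag{$\ast$}
\end{equation*}
with $C$ depending only on $\alpha$, $M$ and the Gagliardo seminorm
\[
	[\A]_\delta^2 := \int_I\int_I \frac{\norm{\A(t)-\A(s)}_{\L(V,V')}^2}{\abs{t-s}^{2+2\delta}}\,\d t\,\d s < \infty :
\]
indeed $(\ast)$ forces $u(t)\in D(A(t))$ a.e.\ with $A(\cdot)u(\cdot)\in L^2(I;H)$, hence $u'=f-A(\cdot)u(\cdot)\in L^2(I;H)$ and $u\in\MRs_\fra(H)$.

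For $(\ast)$ I would freeze the coefficient at the \emph{upper} endpoint and write
\[
	A(t)u(t) = \int_0^t A(t)e^{-(t-s)A(t)}f(s)\,\d s + \int_0^t A(t)\bigl[U(t,s)-e^{-(t-s)A(t)}\bigr]f(s)\,\d s =: \mathrm{(I)}+\mathrm{(II)}.
\]
The correction $\mathrm{(II)}$ is handled by the Acquistapace--Terreni-type parabolic bounds for the evolution family underlying \cite{HieMon00b}: the $\L(H)$‑norm of $A(t)[U(t,s)-e^{-(t-s)A(t)}]$ is controlled by $(t-s)^{-1}$ times a modulus of continuity of $\A(\cdot)$ over $[s,t]$, and the $\mathring W^{1/2+\delta,2}$‑hypothesis makes the resulting kernel integrable in an $L^2$‑averaged sense, so $\mathrm{(II)}$ is bounded on $L^2(I;H)$; this part should be comparatively routine. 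The frozen term $\mathrm{(I)}$ is the crux. One reads it as a pseudodifferential operator in time whose operator-valued symbol $(t,\xi)\mapsto A(t)\bigl(i\xi+A(t)\bigr)^{-1}$ is --- by uniform sectoriality, sharpened by the bounded $H^\infty$‑calculus --- bounded and Mikhlin-regular in $\xi$, while in $t$ it is only as smooth as $t\mapsto\A(t)$. Its $L^2(I;H)$‑boundedness then follows from the boundedness theory for pseudodifferential operators with symbols of limited smoothness in the base variable --- exactly the results quoted after Theorem~\ref{thm:ouhabaz_spina}, \cite{PorStr06,HytPor08} (extending \cite{Yam86}) --- whose admissible smoothness threshold is $\tfrac12+\delta$. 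This is precisely why the Hölder exponent $\tfrac12$ must be excluded and why $\mathring W^{1/2+\delta,2}$ is the natural hypothesis. Checking that membership of $\A(\cdot)$ in $\mathring W^{1/2+\delta,2}$ yields the Fourier-side symbol estimates the pseudodifferential bound demands is, to my mind, the main obstacle of the whole proof.

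With $(\ast)$ in hand, the trace space is identified by a short-time analysis near $t=0$, where \eqref{eq:ncp} is a small perturbation of the autonomous problem for $A_0$: localizing the estimate above to a short interval $[0,\varepsilon]$ and applying it, with $\A(\cdot)-\A(0)$ in place of $\A(\cdot)$ (the difference having the same fractional-Sobolev regularity), shows that the non-autonomous and the autonomous ($A_0$‑) maximal-regularity spaces have the same trace at $0$. Since the autonomous computation of Section~\ref{sec:autonomous_forms:regularity} gives that this common trace is $D(A_0^{1/2})$, we obtain $\Tr(\fra)=D(A_0^{1/2})$, and at the same time that $\MRs_\fra(H)$ is contained in the autonomous maximal-regularity space $H^1(I;H)\cap L^2(I;D(A_0))$, with control of norms.

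Finally, the $H^{1/2}$‑embedding follows from this last inclusion by mixed-derivative interpolation: $H^1(I;H)\cap L^2(I;D(A_0))\hookrightarrow H^{1/2}\bigl(I;[H,D(A_0)]_{1/2}\bigr)=H^{1/2}\bigl(I;D(A_0^{1/2})\bigr)$, the last identity because $A_0$ has bounded imaginary powers; identifying $D(A_0^{1/2})$ with $V$ yields $\MRs_\fra(H)\hookrightarrow H^{1/2}(I;V)$, completing the proof.
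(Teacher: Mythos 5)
The paper itself does not prove Theorem~\ref{thm:DZ}; it cites \cite{DieZac16} and remarks that ``the proof is surprisingly elementary and based on the Lax--Milgram lemma''. Your route --- evolution family, frozen-coefficient decomposition, Acquistapace--Terreni bounds, and operator-valued pseudodifferential calculus with limited base-variable smoothness --- is thus a genuinely different approach. Unfortunately it does not close.

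The central step is exactly where you flag uncertainty, and I think the gap there is real, not just a detail to be checked. The pseudodifferential boundedness results you invoke (\cite{Yam86,PorStr06,HytPor08}) require \emph{uniform} (H\"older/Besov-$L^\infty$) regularity of the symbol in the base variable, of order strictly larger than $\tfrac 12$; that is precisely what underlies the Hieber--Monniaux/Ouhabaz--Spina chain and why Theorem~\ref{thm:ouhabaz_spina} needs $\beta>\tfrac 12$. The hypothesis here is $\mathcal A \in \mathring W^{1/2+\delta,2}(I;\L(V,V'))$, an $L^2$-in-time fractional Sobolev condition. By the Sobolev embedding $W^{1/2+\delta,2}(I)\hookrightarrow C^\delta(I)$ this only gives H\"older continuity of order $\delta$, which can be arbitrarily small; conversely one has $C^\beta(I)\hookrightarrow \mathring W^{1/2+\delta,2}(I)$ whenever $\delta<\beta-\tfrac12$, so the Dier--Zacher hypothesis is strictly \emph{weaker} than $C^\beta$, $\beta>\tfrac12$. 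Your strategy would therefore, at best, reprove Theorem~\ref{thm:ouhabaz_spina}, not Theorem~\ref{thm:DZ}. Getting the $L^2$-averaged Gagliardo condition to drive the symbol estimates is not a ``main obstacle'' of an otherwise sound argument --- it is the place where this method stops, and it is precisely what the Lax--Milgram/energy approach of \cite{DieZac16} is designed to circumvent, by working directly on the quadratic form of the problem in $L^2(I;V)\cap H^1(I;V')$ rather than through kernel bounds for the evolution family.

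The final step also has an independent error: you claim $\MRs_\fra(H)\subset H^1(I;H)\cap L^2(I;D(A_0))$ and then interpolate. But membership in $\MRs_\fra(H)$ only gives $u(t)\in D(A(t))$ a.e., and the domains $D(A(t))$ can be entirely incomparable to $D(A_0)$ even for $t$ arbitrarily close to $0$ (cf.\ Example~\ref{ex:neumann}, where $D(A)$ depends on the coefficient $m$). So $u(t)\in D(A_0)$ does not follow, the mixed-derivative interpolation does not apply, and the $H^{1/2}(I;V)$-embedding must be obtained in a way that only uses $u(t)\in V$, not $u(t)\in D(A_0)$. In the Lax--Milgram approach the $H^{1/2}(I;V)$-estimate drops out of the coercivity of the bilinear form together with the a priori bound; it is not derived a posteriori from an inclusion into an autonomous maximal-regularity space.
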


The proof is surprisingly elementary and based on the Lax--Milgram lemma. For an $L^p$-version of this result see~\cite[Corollary~5.7]{Fac16e}.

\section{The critical case}\label{sec:critical}

Note that both classes of sufficient regularity conditions on the form, namely Hölder and fractional Sobolev regularity, are special instances of the homogeneous Besov scale defined for a non-autonomous form $\fra$, its associated operator $\mathcal{A}$, an interval $I$ and indices $s \in (0,1)$, $p,q \in [1, \infty]$ via the semi-norm
\begin{equation*}
	\norm{\mathcal{A}}_{\mathring B^{s,p}_q(I)} = \biggl( \int_0^{\infty} \biggl( \frac{1}{h^s } \biggl( \int_{I_h} \norm{\mathcal{A}(t+h) - \mathcal{A}(t)}^p \ \mathrm{d}{t} \biggr)^{1/p} \biggr)^{q} \frac{\mathrm{d}{h}}{h} \biggr)^{1/q},
\end{equation*}
where $I_h = \{ t \in I: t+h \in I \}$ and where one uses the usual modifications for $p = \infty$ or $q = \infty$. Observe that $\mathcal{A}$ is $\beta$-Hölder continuous if and only if $\mathcal{A} \in \mathring B^{\beta,\infty}_{\infty}(I)$. Further, one has $\mathring B^{s,p}_p(I) = \mathring W^{s,p}(I)$ for all $p \in [1, \infty]$ and $s \in (0,1)$. With the positive results and counterexamples discussed in the previous sections we are now able to give a rather complete picture of non-autonomous maximal regularity for forms and to identify a critical case in the Sobolev scale.

On the positive side it follows from embedding results for Besov spaces~\cite{Sim90} that $\mathring{B}^{s,p}_q(I) \hookrightarrow \mathring{W}^{\frac{1}{2} + \delta, 2}(I)$ for some $\delta > 0$ if $s > \frac{1}{2}$ and the \emph{Sobolev index} $s - \frac{1}{p}$ is positive. On the other hand, for $s < 1/2$ the counterexample stated in Theorem~\ref{thm:counterexample_lions} shows, since $C^{\frac{1}{2}}(I) \hookrightarrow \mathring{B}^{s,p}_q(I)$ for all $s < \frac{1}{2}$ and arbitrary $p,q \in [1, \infty]$, that maximal regularity does not hold in the case $s < \frac{1}{2}$ for any $p,q \in [1, \infty]$. Further, if $s \ge \frac{1}{2}$ and the Sobolev index $s - \frac{1}{p}$ is negative, then $\mathring{B}^{s,p}_q(I)$ contains step functions and therefore Example~\ref{ex:counterexample_dier} shows that maximal regularity fails, at least in the absence of the Kato square root property. Hence, what remains open are the cases of $s = \frac{1}{2}$ and non-negative Sobolev index, i.e.\ $p \ge 2$, and of $s > \frac{1}{2}$ and zero Sobolev index, i.e.\ $s = \frac{1}{p}$. 

Note that in the second case one has $\mathring{B}^{\frac{1}{p},p}_q(I) \hookrightarrow \mathring{B}^{\frac{1}{2},2}_q(I)$ and, further, that we know a positive answer for symmetric forms or forms satisfying a parameterized variant of the square root property in the boundary case $s = p = q = 1$, i.e.\ $\mathcal{A} \in \mathring{W}^{1,1}(I)$, because such a form a fortiori has bounded variation and therefore the positive results of Section~\ref{sec:bv} apply. Using~\cite[Example~8.1]{Fac16e} in the Besov scale, we obtain the following new result for the case $s = \frac{1}{2}$.

\begin{proposition}
	Given $p \in [1, \infty]$ and $q \in (2, \infty]$, there exists a coercive, symmetric, non-autonomous form $\fra$ with $\mathcal{A} \in \mathring{B}^{\frac{1}{2},p}_q(I)$ that does not satisfy maximal regularity in $H$.
\end{proposition}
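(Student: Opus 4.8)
The plan is to feed the counterexample construction of~\cite[Example~8.1]{Fac16e} with a modulus of continuity chosen to sit in the logarithmic gap that the hypothesis $q > 2$ opens up, and then to pass from $p = \infty$ to general $p$ for free. First I would reduce to the case $p = \infty$: since $I$ is bounded and, by continuity of $\fra$, the operator $\A(\cdot)$ takes values in a fixed ball of $\L(V,V')$, Hölder's inequality applied to the inner integral in the Besov seminorm yields $\norm{\A}_{\mathring{B}^{1/2,p}_q(I)} \le \abs{I}^{1/p}\norm{\A}_{\mathring{B}^{1/2,\infty}_q(I)}$ for every $p \in [1,\infty]$, so $\mathring{B}^{1/2,\infty}_q(I) \hookrightarrow \mathring{B}^{1/2,p}_q(I)$. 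It therefore suffices to construct a single coercive, symmetric, non-autonomous form $\fra$ that fails maximal regularity in $H$ with $\A \in \mathring{B}^{1/2,\infty}_q(I)$; note that such a form automatically has the Kato square root property, so that here — unlike in Example~\ref{ex:counterexample_dier} — that property cannot be blamed.

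Next I would recall the structure of~\cite[Example~8.1]{Fac16e}: on a suitable Gelfand triple it builds a coercive, symmetric non-autonomous form from a dyadic rotation scheme governed by a parameter sequence $(c_k)$, arranged so that (i) maximal regularity in $H$ fails whenever $(c_k)\notin\ell^2$ — equivalently, whenever the modulus of continuity $\omega$ of $\A$ satisfies $\int_0 (\omega(h)/h^{1/2})^2\,\d h/h = \infty$ — while (ii) one still has $\A \in \mathring{B}^{1/2,\infty}_q(I)$ as soon as $(c_k)\in\ell^q$, equivalently $\int_0 (\omega(h)/h^{1/2})^q\,\d h/h < \infty$. The point is that the construction leaves one free to prescribe, up to constants, the decay of $\omega(h)/h^{1/2}$ as $h\to0^+$, equivalently of $(c_k)$.

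Then I would choose the parameter in the gap between (i) and (ii). Because $q > 2$ the interval $(\tfrac1q,\tfrac12)$ is nonempty; fixing $\alpha$ in it and running the construction with $c_k \asymp k^{-\alpha}$, so that $\omega(h) \asymp h^{1/2}\bigl(\log(\me/h)\bigr)^{-\alpha}$ near $h = 0$, the substitution $u = \log(\me/h)$ gives $\int_0(\omega(h)/h^{1/2})^2\,\d h/h \asymp \int^\infty u^{-2\alpha}\,\d u = \infty$ since $2\alpha < 1$, hence by (i) the resulting coercive symmetric form fails maximal regularity in $H$; whereas $\int_0(\omega(h)/h^{1/2})^q\,\d h/h \asymp \int^\infty u^{-\alpha q}\,\d u < \infty$ since $\alpha q > 1$, hence by (ii) $\A \in \mathring{B}^{1/2,\infty}_q(I)$, and therefore $\A \in \mathring{B}^{1/2,p}_q(I)$ for all $p\in[1,\infty]$ by the first step. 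This is the desired form.

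The main obstacle, I expect, is purely bookkeeping around~\cite[Example~8.1]{Fac16e}: verifying that the target decay $\omega(h)/h^{1/2}\asymp (\log(\me/h))^{-\alpha}$ (equivalently $c_k\asymp k^{-\alpha}$) is an admissible input for that construction and that the two-sided matching in (i)--(ii) holds exactly as stated. Once this is granted, the entire arithmetic content of the hypothesis $q>2$ is just that $\ell^q\setminus\ell^2\neq\emptyset$; for $q = 2$ no such gap exists, in accordance with the fact that the case $s=\tfrac12$, $p=q=2$ is the genuinely open one discussed above.
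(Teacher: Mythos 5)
Your proposal follows the same overall strategy as the paper: adapt Fackler's counterexample~\cite[Example~8.1]{Fac16e} (which refines the Hölder‐$\tfrac12$ counterexample of~\cite[Section~5]{Fac16c}) by replacing the modulus of continuity of $\A$ with a logarithmic refinement, and then verify that the resulting form lies in $\mathring{B}^{1/2,p}_q(I)$ exactly when $q>2$. The final arithmetic — $\int_0(\omega(h)/h^{1/2})^q\,\mathrm{d}h/h$ finite iff $q>2$, while $\int_0(\omega(h)/h^{1/2})^2\,\mathrm{d}h/h=\infty$ — is identical to what the paper obtains after its explicit estimates, which reduce to $\int_0 h^{-1}\abs{\log h}^{-q/2}\,\mathrm{d}h$ for $s=\tfrac12$.

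The genuine (though minor) differences are these. (a) You first reduce to $p=\infty$ via the elementary embedding $\mathring{B}^{1/2,\infty}_q(I)\hookrightarrow\mathring{B}^{1/2,p}_q(I)$ on a bounded interval; the paper carries $p$ through its computations but in effect uses the same observation, since the inner integral over $x$ is estimated uniformly in $t$, so the $t$-integral only contributes the factor $\abs{I_h}^{q/p}\le T^{q/p}$. Your reduction makes this explicit and is correct. (b) You package~\cite[Example~8.1]{Fac16e} as a black box parametrized by a sequence $(c_k)$, with the two clean equivalences (i) and (ii). The paper does not invoke such a modular interface: it works with the concrete choice $V=L^2(0,\tfrac12,w)$, $w(x)=(x\abs{\log x})^{-3/2}$, $u(t,x)=c(x)(\sin(t\varphi(x))+d)$, and proves the Besov membership by splitting the $x$-integral at $\psi^{-1}(h)$ and carrying out the resulting estimates; the failure of maximal regularity is inherited from~\cite[Section~5]{Fac16c}. (c) You take $\alpha\in(\tfrac1q,\tfrac12)$, whereas the paper's concrete weight corresponds to the boundary choice $\alpha=\tfrac12$, which also works since $\sum k^{-1}=\infty$; you could have used $(\tfrac1q,\tfrac12]$.

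The spot you correctly flag as ``bookkeeping'' is precisely where the paper does its real work: verifying that the modulus $h^{1/2}\abs{\log h}^{-\alpha}$ is an admissible input, that the resulting form is coercive and symmetric, that the prescribed $u$ solves the Cauchy problem with $f\in L^2(0,T;H)$ and $u\notin H^1(0,T;H)$, and, most importantly, the two-sided translation between the $x$-weight and the $t$-modulus. The clean biconditionals (i)--(ii) you state are plausible but are not formulated in that form in the paper; as written, your argument has a soft gap at exactly that interface. Since you explicitly acknowledge it, and the surrounding strategy and arithmetic coincide with the paper's, I would call this essentially the same proof with a more modular (and thus slightly more exposed) framing rather than a genuinely different route.
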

\begin{proof}
	Following the line of arguments in~\cite[Example~8.1]{Fac16e} and~\cite[Section~5]{Fac16c}, we choose $V = L^2(0,\frac{1}{2},w)$ for $w(x) = (x \abs{\log x})^{-3/2}$ and $H = L^2(0,\frac{1}{2})$. Further, we let $u(t,x) = c(x)(\sin(t \varphi(x)) + d)$ for some sufficiently large $d$ and $c(x) = x \abs{\log x}$ as well as $\varphi(x) = w(x)$. One can then show as in~\cite[Section~5]{Fac16c} that $u$ solves the non-autonomous Cauchy problem associated to some non-autonomous symmetric, bounded coercive form $\fra\colon [0,T] \times V \times V \to \R$, some initial value $u_0 \in V$ and some inhomogeneous part $f \in L^2(0,T;V)$. Further, $\mathcal{A}$ belongs to $\mathring{B}^{s,p}_q([0,T];\mathcal{L}(V,V'))$ if $u \in \mathring{B}^{s,p}_q([0,T];V)$. This is what we now verify explicitly. We have for the case $p, q < \infty$
	\begin{align*}
		\MoveEqLeft \norm{u}_{\mathring{B}^{s,p}_q([0,T];V)}^q = \int_0^T \biggl( \int_{I_h} \biggl( \int_0^{\frac{1}{2}} \abs{u(t,x) - u(t+h,x)}^2 w(x) \ \mathrm{d}{x} \biggr)^{\frac{p}{2}} \mathrm{d}{t} \biggr)^{\frac{q}{p}} \frac{\mathrm{d} h}{h^{1+sq}}
	\end{align*}
	As in~\cite[Example~6]{Fac16e} we split the inner integral. For this let $\psi(h) = 2h^{3/2} \abs{\log h}^{3/2}$. If $x \le \psi^{-1}(h)$, we estimate the sinus term trivially and have
	\begin{equation*}
		\abs{u(t,x) - u(t+h,x)}^2 w(x) \le x^{1/2} \abs{\log x}^{1/2}.
	\end{equation*}
	For the innermost term we obtain for $F(x) = x^{3/2} \abs{\log x}^{1/2}$ the upper estimate
		\begin{align*}
			\MoveEqLeft \int_0^{\psi^{-1}(h)} x^{1/2} \abs{\log x}^{1/2} \ \mathrm{d}{x} \lesssim \int_0^{\psi^{-1}(h)} F'(x) \ \mathrm{d}{x} = F(\psi^{-1}(h)) \\
			& \lesssim \psi(\psi^{-1}(h)) \normalabs{\log \psi^{-1}(h)}^{-1} = h \normalabs{\log \psi^{-1}(h)}^{-1} \lesssim h \abs{\log h}^{-1}.
		\end{align*}
	Hence, one part of the triple integral can be estimated up to constants by
	\begin{equation*}
		\begin{split}
			\int_0^T \biggl( \int_{0}^T \biggl( \int_0^{\psi^{-1}(h)} x^{\frac{1}{2}} \abs{\log x}^{\frac{1}{2}} \ \mathrm{d}{x} \biggr)^{\frac{p}{2}} \mathrm{d}{t} \biggr)^{\frac{q}{p}} \frac{\mathrm{d} h}{h^{1+sq}} \lesssim \int_0^T  h^{\frac{q}{2} - sq - 1} \abs{\log h}^{-\frac{q}{2}} \mathrm{d}{h}.
		\end{split}
	\end{equation*}
	If $x \ge \psi(h)$, we use the mean value theorem to obtain the estimate
	\begin{equation*}
		\abs{u(t,x) - u(t+h,x)}^2 w(x) \le h^2 x^{-5/2} \abs{\log x}^{-5/2}.
	\end{equation*}
	Now, the innermost integral is estimated for $F(x) = -x^{-3/2} \abs{\log x}^{-5/2}$ by
		\begin{align*}
			\MoveEqLeft \int_{\psi^{-1}(h)}^{\frac{1}{2}} x^{-5/2} \abs{\log x}^{-5/2} \ \mathrm{d}{x} \lesssim \int_{\psi^{-1}(h)}^{\frac{1}{2}} F'(x) \ \mathrm{d}{x} \le - F(\psi^{-1}(\abs{r})) \\
			& \lesssim \frac{1}{\psi(\psi^{-1}(h))} \normalabs{\log \psi^{-1}(h)}^{-1} \lesssim h^{-1} \abs{\log h}^{-1}.
		\end{align*}
	Consequently, the second part of the integral is dominated up to constants by
	\begin{equation*}
		\int_0^T \biggl( \int_{0}^T \biggl( \int_{\psi^{-1}(h)}^{\frac{1}{2}} h^2 x^{-\frac{5}{2}} \abs{\log x}^{-\frac{5}{2}} \ \mathrm{d}{x} \biggr)^{\frac{p}{2}} \mathrm{d}{t} \biggr)^{\frac{q}{p}} \frac{\mathrm{d} h}{h^{1+sq}} \lesssim \int_0^T h^{\frac{q}{2} - sq - 1} \abs{\log h}^{-\frac{q}{2}} \mathrm{d}{h}.
	\end{equation*}
	Hence, $\norm{u}_{\mathring{B}^{s,p}_q([0,T];V)}^q$ is dominated by the above term on the right hand side. One sees that for $s = \frac{1}{2}$ this integral is finite if and only if $q > 2$. Hence, $\fra$ has the claimed regularity.
\end{proof}

By the above counterexample maximal regularity fails for forms in $\mathring{B}^{\frac{1}{2},2}_q(I)$ for all $q > 2$ and for forms in $\mathring{W}^{\frac{1}{2},p}(I) = \mathring{B}^{\frac{1}{2},p}_p(I)$ for all $p > 2$. Further, by Example~\ref{ex:counterexample_dier} maximal regularity may fail for forms in $\mathring{B}^{\frac{1}{2},p}_{p}(I)$ for all $p < 2$. Together with the positive results this clearly identifies one critical case for maximal regularity of forms which we pose as an open problem.

\begin{problem}
	Let $\fra$ be a non-autonomous, bounded, coercive form with $\mathcal{A} \in \mathring{W}^{\frac{1}{2},2}(I)$. Does maximal regularity in $H$ hold if
	\begin{enumerate}
		\item $\fra$ is symmetric?
		\item $\fra$ satisfies the Kato square root property uniformly?
		\item $\fra$ is arbitrary?
	\end{enumerate}
\end{problem}

As a final conclusion, in Figure~\ref{fig:mr}, we illustrate the validity of maximal regularity of non-autonomous coercive bounded forms under the fractional Sobolev regularity $\mathcal{A} \in \mathring{W}^{s,p}(I)$. If $(\frac{1}{p},s)$ is in the white area, maximal regularity holds for arbitrary forms, whereas maximal regularity may fail in the grey areas. In the dark grey area this is known for symmetric forms, whereas counterexamples in the light grey area are only known for non-autonomous forms violating the Kato square root property. At the ``inner'' boundary between the white and the grey areas maximal regularity is known to fail for the solid line part, whereas the problem of maximal regularity is open for the dashed part of the boundary. In particular, the point in the middle of the square is the most critical. 

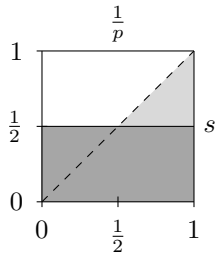
\begin{figure}[ht]\label{fig:mr}
	\centering
    \begin{tikzpicture}
		\fill[gray!30] (0,0) -- (2,2) -- (2,0) -- cycle;
		\fill[gray!70] (0,0) -- (0,1) -- (2,1) -- (2,0) -- cycle;
        \draw[dashed] (0,0) -- (2,2);
        \draw (0,1) -- (2,1);
        \draw (-2pt,0) -- (0,0) node[label={below:$0$}]{} -- (1,0) node[label={below:$\frac 1 2$}]{} -- (2,0) node[label={below:$1$}]{} ;
        \draw (0,-2pt) -- (0,0) node[label = {left:$0$}]{} -- (0,1) node[label = {left:$\frac 1 2$}]{} -- (0,2) node[label = {left:$1$}]{};
        \draw (1,-2pt) -- (1,2pt);
        \draw (-2pt,1)--(2pt,1);
        \draw (2,-2pt)-- (2,1) coordinate[label = {right:$s$}]{} -- (2,2);
        \draw (-2pt,2)--(1,2) coordinate[label = {above:$\frac 1 p$}]{} -- (2,2);
    \end{tikzpicture}
    \caption{Non-Autonomous maximal regularity for $\mathcal{A} \in \mathring{W}^{s,p}(I)$}
\end{figure}

\section{Perturbation results}

As shown by Fackler's example, the Hölder exponent $\frac{1}{2}$ is optimal for maximal regularity in $H$ even in the symmetric case. However, if one improves the boundedness condition on the form, one can allow a weaker Hölder exponent. The following result can be seen as a non-autonomous perturbation of lower order of an autonomous problem. For example, it is well suited for treating non-autonomous Robin boundary conditions.
	
	\begin{theorem}[\cite{AreMon16}]
		Let $0 < \gamma < 1$, $V_{\gamma} = [H,V]_{\gamma}$ and let $\fra$ be a coercive non-autonomous form satisfying
		\begin{equation*}
			\abs{\fra(t,v,w) - \fra(s,v,w)} \le K \abs{t-s}^{\beta} \norm{v}_V \norm{w}_{V_{\gamma}}
		\end{equation*}
		for all $t, s \in [0,T]$, $v, w \in V$, some $K \ge 0$ and some $\beta > \frac{\gamma}{2}$. Then $\fra$ has maximal regularity in $H$. Moreover, the solution space $\MRs_\fra(H)$ is included in $C([0,T];V)$. In particular, $\Tr(\fra) = V$.
	\end{theorem}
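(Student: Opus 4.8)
The plan is to reduce to the homogeneous case $u_0=0$ and to establish the a priori estimate $\norm{u}_{H^1(0,T;H)}\le C\norm{f}_{L^2(0,T;H)}$ for the solution $u\in\MRs(V,V')$ of~\eqref{eq:ncp} with $f\in L^2(0,T;H)$. Together with the embedding $\MRs_\fra(H)\hookrightarrow C([0,T];V)$ this yields, via the general fact recalled above (once $\fra$ has maximal regularity in $H$, the problem~\eqref{eq:ncp} is well posed in $\MRs_\fra(H)$ exactly for $u_0\in\Tr(\fra)$), the solvability for $u_0\in V$ and the identity $\Tr(\fra)=V$; here $\Tr(\fra)\subseteq V$ is immediate from the embedding, while $V\subseteq\Tr(\fra)$ comes from the same a priori estimate applied to the solution of $u'+\A(\cdot)u=0$, $u(0)=v$, where $v\in V=D(A(0)^{1/2})$ is used to control the semigroup term (the square-root property $D(A(t)^{1/2})=V$, uniform in $t$, being in force in this setting). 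It suffices to prove the estimate on subintervals $[a,b]$ of small length $\tau$ and then to patch.

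The heart of the matter would be a Duhamel formula with the operator frozen at the running time, as in the proof of Theorem~\ref{thm:ouhabaz_spina}; the key point is that the \emph{lower-order} nature of the time increments — they are bounded only by the weaker $V_\gamma$-norm in the second slot, with $\gamma<1$ — lets one replace the harmonic-analytic input of Ouhabaz--Spina by elementary resolvent estimates. Since $u'(s)=f(s)-\A(s)u(s)$ and $\A(t)$ commutes with $e^{-r\A(t)}$, one has in $V'$ the identity $\frac{\d}{\d s}\bigl(e^{-(t-s)\A(t)}u(s)\bigr)=e^{-(t-s)\A(t)}f(s)+e^{-(t-s)\A(t)}(\A(t)-\A(s))u(s)$; integrating over $s\in(0,t)$, using $u(0)=0$, and applying the closed operator $\A(t)$,
\[
	\A(t)u(t)=\int_0^t\A(t)e^{-(t-s)\A(t)}f(s)\,\d s+\int_0^t\A(t)e^{-(t-s)\A(t)}(\A(t)-\A(s))u(s)\,\d s .
\]
For the second integral, the hypothesis gives $\norm{(\A(t)-\A(s))u(s)}_{V_\gamma'}\le K\abs{t-s}^\beta\norm{u(s)}_V$, and interpolating the elementary smoothing estimates of $\A(t)$ on $V'$ and on $H$ (uniformly in $t$, by coercivity and boundedness) gives $\norm{\A(t)e^{-r\A(t)}}_{\L(V_\gamma',H)}\lesssim r^{-1-\gamma/2}$; the integrand is therefore $\lesssim\abs{t-s}^{\beta-1-\gamma/2}\norm{u(s)}_V$ in $H$, and since $\beta>\gamma/2$ the kernel $r\mapsto r^{\beta-1-\gamma/2}$ has $L^1(0,\tau)$-norm $O(\tau^{\beta-\gamma/2})$, so Young's inequality bounds this term in $L^2(a,b;H)$ by $C\tau^{\beta-\gamma/2}\norm{u}_{L^2(a,b;V)}$. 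For the first integral one splits $\A(t)e^{-(t-s)\A(t)}=\A(0)e^{-(t-s)\A(0)}+R(t,s)$: the autonomous part convolved with $f$ is bounded on $L^2(a,b;H)$ by the autonomous maximal regularity in $H$ of Section~\ref{sec:autonomous_forms:regularity}, uniformly in $a$; and $R(t,s)$ is controlled through the resolvent identity $\A(t)(\mu-\A(t))^{-1}-\A(0)(\mu-\A(0))^{-1}=\mu(\mu-\A(t))^{-1}(\A(t)-\A(0))(\mu-\A(0))^{-1}$ together with $\norm{\A(t)-\A(0)}_{\L(V,V_\gamma')}\le K\tau^\beta$, $\norm{(\mu-\A(0))^{-1}}_{\L(H,V)}\lesssim\abs{\mu}^{-1/2}$ and $\norm{(\mu-\A(t))^{-1}}_{\L(V_\gamma',H)}\lesssim\abs{\mu}^{-(1-\gamma/2)}$, which after integration along the sectorial contour gives $\norm{R(t,s)}_{\L(H)}\lesssim K\tau^\beta\abs{t-s}^{-(1+\gamma)/2}$ — a kernel integrable precisely because $\gamma<1$, hence contributing $O(\tau^{\beta+(1-\gamma)/2})\norm{f}_{L^2(a,b;H)}$. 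Taking $\tau$ so small that both $\tau$-factors are $\le\tfrac12$ and using Lions' a priori bound $\norm{u}_{L^2(a,b;V)}\lesssim\norm{f}_{L^2(a,b;V')}+\norm{u(a)}_H$ from Theorem~\ref{thm:Lions}, one absorbs the small terms and obtains $\norm{\A(\cdot)u(\cdot)}_{L^2(a,b;H)}\lesssim\norm{f}_{L^2(0,T;H)}+\norm{u(a)}_H$, so that $u'=f-\A(\cdot)u(\cdot)\in L^2(a,b;H)$.

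Iterating this over $[0,T]$ requires, on later intervals, control of the additional Duhamel term $\A(t)e^{-(t-a)\A(t)}u(a)$, which needs $u(a)\in V$ — then $\norm{\A(t)e^{-r\A(t)}u(a)}_H\lesssim r^{-1/2}\norm{u(a)}_V$ with $r^{-1/2}$ square-integrable near $0$. The patching is thus intertwined with the embedding $\MRs_\fra(H)\hookrightarrow C([0,T];V)$, which I would again approach by freezing: on a small interval $I$ around a point $t_0$ one regards $u\in\MRs_\fra(H)$ as a solution of $u'+\A(t_0)u=g$ with $g=f+(\A(t_0)-\A(\cdot))u\in L^2(I;V_\gamma')$ (the lower-order bound being used now with no smallness needed), applies the maximal $L^2$-regularity of $-\A(t_0)$ on the Hilbert space $V_\gamma'=[H,V']_\gamma$ (valid since $-\A(t_0)$ generates a holomorphic semigroup there, as in the autonomous case of Section~\ref{sec:autonomous_forms:regularity}), and identifies the corresponding trace space by complex interpolation and the bounded $H^\infty$-calculus of $\A(t_0)$ inherited from $V'$ and $H$; together with the uniform square-root property and the routine upgrade from weak to strong continuity in $V$, this should yield $u\in C([0,T];V)$. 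With $u(a)\in V$ in hand the finitely many patching steps close, so $\fra$ has maximal regularity in $H$, and $\Tr(\fra)=V$ follows as in the first paragraph.

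The step I expect to be the main obstacle is precisely this embedding into $C([0,T];V)$: whereas maximal regularity in $H$ is reduced, by the conditions $\beta>\gamma/2$ and $\gamma<1$, through the displayed Duhamel formula, the autonomous result of Section~\ref{sec:autonomous_forms:regularity}, and the bare-hands resolvent estimates above, to routine convolution bounds, the continuity statement is genuinely about how the moving domain $D(A(t))$ — equivalently, the frozen problem at time $t$ — varies with $t$, and will require a careful interpolation-and-square-root analysis whose constants must be controlled uniformly in the freezing point and whose trace-space bookkeeping has to reach $V$ itself rather than a nearby interpolation space. By contrast the term with the operator frozen at the running time, which in the Hölder setting of Theorem~\ref{thm:ouhabaz_spina} is the hard part requiring substantial harmonic analysis, is here handled elementarily, precisely because of the lower-order structure.
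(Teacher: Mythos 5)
This theorem is cited from Arendt--Monniaux~\cite{AreMon16} and the survey does not reproduce its proof, so your attempt has to be judged on its own terms. The first half is sound: the Duhamel formula with the operator frozen at the running time, the interpolated smoothing estimate $\norm{\A(t)e^{-r\A(t)}}_{\L(V_\gamma',H)}\lesssim r^{-1-\gamma/2}$ making the perturbation kernel $\abs{t-s}^{\beta-1-\gamma/2}$ integrable exactly when $\beta>\gamma/2$, the split of the $f$-convolution into an autonomous part plus a remainder $R(t,s)$ controlled through the resolvent identity with $\norm{R(t,s)}_{\L(H)}\lesssim K\tau^\beta\abs{t-s}^{-(1+\gamma)/2}$ (integrable since $\gamma<1$), and the absorption on short intervals --- all of these are correct and do give local maximal regularity in $H$.

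The gap is where you yourself expect it, and the fix you sketch does not close it. Regarding $u\in\MRs_\fra(H)$ on a short interval $I$ around $t_0$ as the solution of $u'+\A(t_0)u=g$ with $g=f+(\A(t_0)-\A(\cdot))u\in L^2(I;V_\gamma')$ and applying autonomous maximal $L^2$-regularity for $-\A(t_0)$ on the base space $V_\gamma'$ identifies the trace space (via $H^\infty$-calculus and interpolation) as $\bigl(V_\gamma',D_{V_\gamma'}(\A(t_0))\bigr)_{\frac12,2}$. On the fractional-power scale $X_\theta=D(A(t_0)^\theta)$ with the square root normalisation $V=X_{1/2}$, one has $V_\gamma'=X_{-\gamma/2}$ and $D_{V_\gamma'}(\A(t_0))=X_{1-\gamma/2}$, so the trace space is $X_{(1-\gamma)/2}=V_{1-\gamma}$, which for $\gamma>0$ is \emph{strictly larger} than $V$. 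The factor $K\tau^\beta$ shrinks norms but cannot promote $V_{1-\gamma}$ to $V$, so this pass only delivers $u\in C(I;V_{1-\gamma})$; consequently the patching, which needs $u(a)\in V$ at each break point to control the term $\A(t)e^{-(t-a)\A(t)}u(a)$, does not close. You need a genuinely different mechanism for the $V$-continuity --- for instance an estimate of $\norm{u(t)}_V$ itself through a Duhamel representation that also uses the already established information $\A(\cdot)u(\cdot)\in L^2(I;H)$, or an energy-type identity exploiting the lower-order structure of $\partial_t\fra$ --- rather than a single freezing-and-max-regularity pass on $V_\gamma'$. Separately, note that you invoke the uniform Kato square root property $D(A(t)^{1/2})=V$ as ``being in force'', but it does not appear among the stated hypotheses and is not an obvious consequence of the lower-order perturbation structure; the companion result of Dier--Zacher cited in this section concludes $\Tr(\fra)=D(A(0)^{1/2})$ rather than $\Tr(\fra)=V$, so you should verify whether $D(A(0)^{1/2})=V$ is an explicit assumption in the source before relying on it.
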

	
	This result has been extended to maximal regularity in $L^p$, $1 < p < \infty$, by Ouhabaz~\cite{Ouh15}.
	Moreover, the result stated above even holds for a weaker Sobolev regularity condition in the spirit of Theorem~\ref{thm:DZ} (see \cite{DieZac16}, where the above result is treated as a perturbation result for non-autonomous equations).
	In the limit case $\gamma = 0$, i.e.\
\begin{equation*}
	\abs{\fra_2(t,v,w)} \le K \norm{v}_V \norm{w}_H \qquad \text{for all } v,w \in V,\ t \in [0,T]
\end{equation*}
	no additional time regularity on the perturbation is needed. For additive perturbations this is done in~\cite{ADLO14}, \cite{Die15} and~\cite{DieZac16}. For multiplicative perturbations we refer to~\cite{ADLO14}, \cite{Die15}, \cite{AugJacLas15} and~\cite{AchOuh16}.

The study of perturbations goes back to the classical works of~\cite[VIII Sec.~1]{Lio61} and Bardos~\cite{Bar71}.
In the context of his perturbation results Lions asks again \cite[p.~154]{Lio61} how far regularity assumptions can be reduced.
Now, we have quite precise answers to this question.	

\section{Elliptic operators}

	In an abstract setting, Section~\ref{sec:counterexample} and \ref{sec:symmtric_forms} show that maximal regularity is not valid in general and that the positive results are already close to optimal conditions. However, so far no counterexample seems to be known for elliptic operators. And indeed, it is known that the square root property holds for forms associated with elliptic operators, even in a uniform sense, i.e.\ $D(A^{1/2}) = D(V)$ with equivalent norms, and the corresponding constants only depending on the ellipticity constants. This is exactly what the positive solution of the famous Kato square root problem says~\cite{AHL+02}. For the Kato square root property on Lipschitz domains we refer to~\cite{AusTch03} and for mixed boundary conditions to~\cite{AKM06},~\cite{EgeHalTol16} and~\cite{EgeHalTol14}. Here is a formulation of the problem of non-autonomous maximal regularity for elliptic operators.

	Let $\Omega \subset \R^d$ be open, $a_{ij}\colon [0,T] \times \Omega \to \R$ be bounded and measurable with
	\begin{equation*}
		\sum_{i,j=1}^d a_{ij}(t,x) \xi_i \xi_j \ge \eta \abs{\xi}^2
	\end{equation*}
	for almost all $(t,x) \in [0,T] \times \Omega$ and all $\xi \in \R^d$, where $\eta > 0$. Let $V$ be a closed subspace of $H^1(\Omega)$ containing $H_0^1(\Omega)$. Then
	\begin{equation*}
		\fra(t,v,w) = \int_{\Omega} \sum_{i,j=1}^d a_{ij}(t,x) \partial_i v(x) \overline{\partial_j w(x)} \ \d x
	\end{equation*}
	defines a coercive non-autonomous form on $[0,T] \times V \times V$. Let $H = L^2(\Omega)$.
	
	\begin{problem}\label{prob:mr_elliptic}
		Does this form $\fra$ have maximal regularity in $H$?
	\end{problem}
	
	We expect that further conditions on the coefficients are needed, but they might be weaker than those we encountered for abstract forms. A first result in this direction was obtained very recently by Auscher and Egert.
	
	\begin{theorem}[\cite{AusEge16}]\label{thm:auscher_egert}
		Assume in addition to the assumptions made above that there exists $M \ge 0$ such that
		\begin{equation}
			\label{eq:auscheregert_condition}
			\sup_{I} \frac{1}{\abs{I}} \int_I \int_I \frac{\abs{A(t,x) - A(s,x)}^2}{\abs{t-s}^2} \ \d s \ \d t \le M \qquad (\text{a.e.\ } x \in \Omega).
		\end{equation}
		Then the form $\fra$ satisfies maximal regularity in $H = L^2(\Omega)$. Here $A(t,x) = (a_{ij}(t,x))_{i,j = 1, \ldots, d}$ and the supremum is taken over all intervals $I \subset [0,T]$, $\abs{I}$ denoting the length of $I$.
	\end{theorem}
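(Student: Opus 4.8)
The plan is to establish the a priori estimate for maximal regularity by a perturbative argument around the autonomous (frozen–coefficient) problem, working in $L^2(0,T;H)$ with $H = L^2(\Omega)$, and to compensate for the fact that hypothesis~\eqref{eq:auscheregert_condition} sits at the critical smoothness $s = \tfrac12$ — where the abstract results such as Theorem~\ref{thm:DZ} no longer apply — by exploiting the structure special to divergence-form elliptic operators: pointwise-in-$x$ coefficients together with the solution of the Kato square root problem. Note first that \eqref{eq:auscheregert_condition} is scale invariant, so it is genuinely a Campanato/Carleson-type condition and is \emph{not} implied by membership of $\A$ in the global space $\mathring W^{1/2,2}(I;\mathcal L(V,V'))$; hence the theorem is compatible with the open problem above and the proof must really use this self-improving structure. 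After the usual reduction to $u_0 = 0$ and extension of the problem to the line (which preserves \eqref{eq:auscheregert_condition} up to a constant), it suffices to prove $\norm{\A(\cdot)u(\cdot)}_{L^2(\R;H)} \lesssim \norm{f}_{L^2(\R;H)}$ for the $\MRs(V,V')$-solution $u$ of $u' + \A(\cdot)u = f$.

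The key structural input is the Kato square root theorem \cite{AHL+02} (in the versions for Lipschitz domains or mixed boundary conditions where needed): for the frozen operators $A(t) = -\operatorname{div}(A(t,\cdot)\nabla)$ on $V$ one has $D(A(t)^{1/2}) = V$ with $\norm{A(t)^{1/2}v}_H \simeq \norm{v}_V$, all constants depending only on the ellipticity constant $\eta$ and the $L^\infty$-bound of the coefficients, hence uniformly in $t$. In particular $\{A(t)\}_{t\in\R}$ has a uniformly bounded $H^\infty$-calculus on $H$ and uniform quadratic estimates $\int_0^\infty \norm{(sA(t))^{1/2}e^{-sA(t)}g}_H^2\,\frac{\d s}{s} \simeq \norm{g}_H^2$. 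I would then substitute $w(t) = A(t)^{1/2}u(t)$, which by the Kato identification lies in $L^2(\R;H)$ with $\norm{w}_{L^2(H)} \simeq \norm{u}_{L^2(V)}$ and satisfies $\A(t)u(t) = A(t)^{1/2}w(t)$, turning the equation for $u$ into one for $w$ in which the non-autonomy is concentrated in the ``time-derivative of the square root''. Using the integral representation $A(t)^{-1/2} = c\int_0^\infty (\lambda + A(t))^{-1}\lambda^{-1/2}\,\d\lambda$ and the resolvent identity $(\lambda + A(t))^{-1} - (\lambda + A(s))^{-1} = (\lambda + A(t))^{-1}(A(s) - A(t))(\lambda + A(s))^{-1}$, every term produced by the non-autonomy reduces to the pointwise multiplication operator $g \mapsto (A(t,\cdot) - A(s,\cdot))\nabla(\cdots)$, sandwiched between resolvents of the frozen operators and integrated in $\lambda$.

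The heart of the argument — and the step I expect to be the main obstacle — is to bound these error terms on $L^2(\R;H)$ with \emph{small} norm, or at least with a norm that can be made small after localizing $t$ to short intervals, so that they may be absorbed and the a priori estimate closes. This is exactly what \eqref{eq:auscheregert_condition} is built for: after the change of variable $s = t+h$, the quantity $\abs{A(t+h,x) - A(t,x)}^2\,\frac{\d t\,\d h}{h^2}$ is, for a.e.\ fixed $x$, a Carleson measure on $\R\times(0,\infty)$ with Carleson norm $\lesssim M$, uniformly in $x$; combining this with the uniform square function estimates above via Carleson's embedding theorem controls the error terms in $L^2(\Omega)$ at almost every time, and integration in $t$ closes the estimate. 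The difficulty is twofold. First, we are at the critical smoothness $s = \tfrac12$, where the naive $\lambda$-integration of resolvent norm bounds only barely converges, so the resolvent splittings at $\lambda \lesssim 1$ and $\lambda \gtrsim 1$ must be balanced very carefully — in practice this forces the machinery developed for the Kato problem itself (off-diagonal estimates, tent spaces, $T(1)$-type arguments) rather than crude operator-norm bounds. Second, all estimates must be uniform in $x$, since only then does the pointwise-in-$x$ Campanato hypothesis genuinely upgrade to an operator bound on $L^2(\Omega)$. Once the error terms are absorbed — splitting $\R$ into intervals on which the Carleson contribution lies below the absorption threshold, solving locally, and gluing — one obtains $\norm{A(\cdot)^{1/2}w}_{L^2(H)} = \norm{\A(\cdot)u(\cdot)}_{L^2(H)} \lesssim \norm{f}_{L^2(H)}$, hence $u' \in L^2(\R;H)$, i.e.\ maximal regularity in $H$.
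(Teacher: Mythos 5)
This paper does not prove Theorem~\ref{thm:auscher_egert}: it is a survey, and the result is quoted from \cite{AusEge16} without argument. So your proposal can only be compared against the published Auscher--Egert proof and checked for internal consistency.

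You correctly identify the essential structural inputs: the uniform Kato square root identification $D(A(t)^{1/2}) = V$ with constants depending only on ellipticity, the uniform $H^\infty$-calculus and quadratic estimates for the frozen operators, the reduction to $u_0 = 0$ and extension to the line, the reduction (via the integral representation and the resolvent identity) of the non-autonomy to pointwise multiplication by $A(t,\cdot) - A(s,\cdot)$ sandwiched between resolvents, and the crucial observation that \eqref{eq:auscheregert_condition} is a scale-invariant, pointwise-in-$x$ Carleson condition which is not implied by $\mathring W^{1/2,2}$-membership of $\mathcal{A}$ (this is indeed why the theorem does not settle the open problem of Section~\ref{sec:critical}). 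All of these tools are genuinely in play in \cite{AusEge16}.

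The closing step of your argument, however, contains a gap, and you planted it yourself. Having emphasized (correctly) that \eqref{eq:auscheregert_condition} is scale-invariant, you then propose to finish by "splitting $\R$ into intervals on which the Carleson contribution lies below the absorption threshold." Scale-invariance means precisely that this cannot succeed: the Carleson norm over a subinterval $J$ is still a supremum over all $I \subset J$, and it does not shrink as $\abs{J} \to 0$, because the small scales contribute fully on every subinterval. Since the theorem imposes no smallness on $M$, the error terms produced by your resolvent expansion cannot be made small by localization, and the a priori estimate does not close by absorption-and-gluing. What the hypothesis actually yields, via Carleson embedding against the uniform quadratic estimates, is a \emph{direct} $L^2$ bound with constant comparable to $M$ and no smallness at all. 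The published proof exploits exactly this: it works on the space $\dot H^{1/2}(\R;L^2(\Omega)) \cap L^2(\R;V)$, uses the skew-symmetry of the half-order time derivative to produce hidden coercivity of the parabolic sesquilinear form (a Lions/Kaplan-type representation theorem rather than a perturbative contraction), and controls the non-autonomous contribution once and for all by a commutator estimate of Coifman--Meyer type for multiplication by $A(\cdot,x)$ against $D_t^{1/2}$, with bound $\lesssim M$; this also explains the extra $H^{1/2}$-in-time regularity the survey mentions. To make your argument correct you would need to replace the absorption-and-gluing step by such a direct invertibility argument in which $M$ enters only as a multiplicative constant, not as a quantity required to lie below a threshold.
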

	
	Condition~\eqref{eq:auscheregert_condition} is stronger than Hölder continuity with index $\frac{1}{2}$, but weaker than Hölder continuity with index $\alpha > \frac{1}{2}$ which is the hypothesis in Theorem~\ref{thm:ouhabaz_spina} by Ouhabaz and Spina. It is also shown by Auscher and Egert that in the situation of Theorem~\ref{thm:auscher_egert} the solution is in $H^{1/2}([0,T];V)$, a new regularity phenomenon first observed in~\cite{DieZac16}. 
	
	Even in dimension $1$ Problem~\ref{prob:mr_elliptic} seems to be open. We want to explain why a positive answer could be of interest for quasilinear parabolic equations. To be as simple as possible, we formulate the following example in dimension $1$.
	
	\begin{example}\label{ex:quasilinear}
		Let $H = L^2(0,1)$, $V = H^1(0,1)$ and let $m\colon (0,1) \to [\delta, \delta^{-1}]$ be continuous, where $0 < \delta < 1$. Let $f \in L^2(0,T;H)$, $u_0 \in H$. Using the compactness of the embedding
		\begin{equation*}
			L^2(0,T;V) \cap H^1(0,T;V') \hookrightarrow L^2(0,T;H) \quad\text{(Aubin--Lions lemma)}
		\end{equation*}
		and Lions' Theorem~\ref{thm:Lions} on maximal regularity in $V'$, one can show with the help of Schauder's fixed point theorem that there exists $u \in \MRs(V,V')$ such that $u(0) = u_0$ and
		\begin{equation*}
			\int_0^1 u'(t) v \ \d x + \int_0^1 m(u(t)) u_x(t) v_x \ \d x = \int_0^1 f(t) v \ \d x \qquad t\text{-a.e.\ }
		\end{equation*}
		for all $v \in H^1(0,1)$, see~\cite[Sec.~4 (II)]{ADLO14} for a proof. Note that the linear part is a non-autonomous problem since we plug in a solution. We would like to prove that $u$ solves the quasilinear parabolic Neumann boundary value problem. If Problem~\ref{prob:mr_elliptic} had a positive answer, then $u \in H^1(0,T;L^2(0,1))$ and then, by Example~\ref{ex:neumann}, we would obtain the following.
		
		Write $u(t,x) := u(t)(x)$. Then for $t \in [0,T]$ a.e.\ we have $u(t,\cdot) \in H^1(0,1)$ and $m(u(t,\cdot))u_x(t, \cdot) \in H^1(0,1)$ and $u(t,x)$ solves the problem
		\begin{equation*}
		 \left\{\begin{aligned}
			u_t(t,x) - (m(u(t,x)) u_x(t,x))_x & = f(t,x)\\
			u(0,x) & = u_0(x),
		\end{aligned} \right.
		\end{equation*}
		as well as $u_x(t,\cdot) \in C[0,1]$ and $u_x(t,1) = u_x(t,0) = 0$ almost everywhere. Thus the solution of the quasilinear problem satisfies Neumann boundary conditions. 
	\end{example}
	
	\begin{remark}
		The problem in one dimension occurs since our operator is in divergence form. If it is in non-divergence form, even in higher dimensions much more can be said (see~\cite{AreChi10}).
	\end{remark}
	
\emergencystretch=0.75em
\printbibliography

\end{document}